\newtheorem{lemma}{Lemma}
\newtheorem{theorem}{Theorem}
\begin{document}
\title{Information-Theoretic Bounds for Integral Estimation} 

\author[1]{Donald Q.~Adams}
\author[2]{Adarsh~Barik}
\author[3]{Jean~Honorio}

\affil[1]{Department of Computer Science\\
                    Purdue University\\
                    West Lafayette, IN 47907, USA\protect\\
                    Email: adams391@purdue.edu}
\affil[2]{Department of Computer Science\\
                    Purdue University\\
                    West Lafayette, IN 47907, USA\protect\\
                    Email: abarik@purdue.edu}
\affil[3]{Department of Computer Science\\
                    Purdue University\\
                    West Lafayette, IN 47907, USA\protect\\
                    Email: jhonorio@purdue.edu}

\date{}
\maketitle

\begin{abstract}
   In this paper, we consider a zero-order stochastic oracle model of estimating definite integrals. In this model, integral estimation methods may query an oracle function for a fixed number of noisy values of the integrand function and use these values to produce an estimate of the integral. We first show that the information-theoretic error lower bound for estimating the integral of a $d$-dimensional function over a region with $l_\infty$ radius $r$ using at most $T$ queries to the oracle function is $\Omega\left(2^d r^{d+1}\sqrt{d/T}\right)$. Additionally, we find that the Gaussian Quadrature method under the same model achieves a rate of $O\left(2^{d}r^d/\sqrt{T}\right)$ for functions with zero fourth and higher-order derivatives with respect to individual dimensions, and for Gaussian oracles, this rate is tight. For functions with nonzero fourth derivatives, the Gaussian Quadrature method achieves an upper bound which is not tight with the information-theoretic lower bound. Therefore, it is not minimax optimal, so there is space for the development of better integral estimation methods for such functions.
\end{abstract}

\section{Introduction}
\label{sec:introduction} 

Estimating definite integrals is a common technique used in many fields. Methods such as the Trapezoid Rule and Simpson’s Rule are taught in introductory calculus courses as fundamental approaches to approximating the value of definite integrals by simply querying the function at certain specified points. Now, methods such as the Gaussian Quadrature method are used in physics to perform integral estimation in the Finite Element Method, and in the fields of statistics and machine learning, integral estimation often arises when computing expectations of intractable functions.

Due to the commonality of this technique, one subsequent inquiry would be to determine the hardness of correctly estimating such definite integrals with respect to factors such as dimension, the region of integration, and the number of times the function can be queried as part of the estimation method. Using a minimax approach, lower bounds can be obtained on the minimum possible error attainable by any integral estimation method. Likewise, statistical complexity upper bounds can be determined for the error of specific integral estimation methods, which allows them to be compared to the information-theoretic lower bounds to find areas of improvement. By assuming that queries to the function return a noisy value provided by an oracle, computing an accurate integral estimation both becomes harder and becomes prone to statistical analysis.

Thus far, it seems that no other papers have analyzed this problem from an information-theoretic point of view. Integral estimation is often performed in the context of computing expectations, so the results are relevant to specific probability density functions. In this paper, we instead consider a more general class of functions. Additionally, some particular integral estimation methods have been analyzed to determine bounds on the error given precise, non-noisy values of the integrand function \cite{devore1984error}. However, the added obstacle of a noisy oracle function makes the upper bounds from this paper more general, and the information-theoretic minimax approach allows the determined lower bounds to hold generally over arbitrary integral estimation methods. 

In this paper, we achieved the following results. First, we show that the information-theoretic error lower bound for any integral estimation method on $d$-dimensional functions over a region containing an $l_\infty$-norm ball of radius $r$ using $T$ queries to the oracle function is $\Omega\left(2^d r^{d+1}\sqrt{d/T}\right)$. We then prove that the Gaussian Quadrature method of integral estimation converges with a rate of $O\left(2^{d}r^d/\sqrt{T}\right)$ for functions with zero fourth and higher-order derivatives with respect to individual dimensions where each point determined by the Gaussian Quadrature method is queried $m$ times. By noting that $T=m2^d$ in this case, we see that the upper bound of Gaussian Quadrature is almost tight with the general lower bound. Finally, the Simpson’s Rule method achieves a similar upper bound with respect to $m$, but it queries a greater number of overall points to achieve it.

\section{Preliminaries}
\label{sec:preliminaries} 

In this section, we lay out our problem and provide definitions of the different error types and integration methods which will be discussed. Assume that we are given a region of integration $S\subseteq\mathbb{R}^d$ and a set of $d$-dimensional functions $\mathcal{F}=\{f\mid f:S\rightarrow\mathbb{R}\}$. We wish to estimate the integral $\int_{x\in S} f(x)\in\mathbb{R}$ using only $T$ queries to a zero-order stochastic oracle. A zero-order stochastic oracle is defined as a random function $\phi:S\rightarrow\mathbb{R}$ that returns a noisy, unbiased estimate $\phi(x, f)$ of the function $f$ with bounded variance. That is,

$$\textbf{E}[\phi(x, f)] = f(x)\text{ and Var}(\phi(x, f)) \leq \sigma^2$$

Additionally, a model $\mathcal{M}$ is defined as a method which makes $T$ queries to the given oracle function at points $x_1,\ldots,x_T$ defined by the model and returns an estimate for the integral $\int_{x\in S} f(x)$ using the noisy values returned by the oracle function. Then we let $\mathbb{M}_T$ be the class of all models that meet the above definition.

\subsection{Expected Risk and Minimax Error}

In this section, we discuss the definitions for the expected risk of an individual model and the minimax error used to determine the theoretical best achievable error across all possible models as often used in statistics and machine learning \cite{wainwright00,wasserman2006all}. For any particular function $f$ and model $\mathcal{M}$, let 

\begin{equation}
    I=\int_{x\in S} f(x)
\end{equation}

be the true integral value of the function $f$ over the integration region $S$, and let $I_\mathcal{M}$ be the estimated integral value produced by the model. Then the expected risk of the model $\mathcal{M}$, which is the average error of the model over all oracle functions, is defined as $\textbf{E}_\phi\left[\left|I-I_\mathcal{M}\right|\right]$. Now we define the maximum risk of the model by analyzing the expected risk over all functions $f\in\mathcal{F}$. That is,

$$\epsilon(\mathcal{M},\mathcal{F},\phi)\coloneqq\sup_{f\in\mathcal{F}}\textbf{E}_\phi\left[\left|I-I_\mathcal{M}\right|\right]$$

Finally, in order to determine the minimax error, we consider the model with the lowest maximum risk. That is, the minimax error is defined as the infimum of the maximum risk over all models in the class $\mathbb{M}_T$. 

$$\epsilon^*(\mathcal{F},\phi)\coloneqq\inf_{\mathcal{M}\in\mathbb{M}_T}\epsilon(\mathcal{M},\mathcal{F},\phi)$$

Therefore, we can use this minimax approach to find a lower bound on the theoretical best attainable error by any possible model. The main result of this paper will determine such a lower bound by considering a subclass of functions, so since the maximum risk of a model takes the supremum over all functions $f\in\mathcal{F}$, a lower bound for any subclass of functions $\mathcal{G}\subset\mathcal{F}$ must also be a lower bound for the minimax error of $\mathcal{F}$.

\subsection{Gaussian Quadrature Method}

We will also analyze the sample complexity upper bound for the Gaussian Quadrature integral estimation method. With this method, we integrate over a region $R=[-r,r]^d$, and we let $V_d=(\pm\frac{r}{\sqrt{3}}, \pm\frac{r}{\sqrt{3}},\ldots,\pm\frac{r}{\sqrt{3}})$ be the set of points at which this method will query the oracle function. Clearly, $\left|V_d\right|=2^d$.

Then let $T=m2^d$ be the total number of times the Gaussian Quadrature method will query the oracle function. More specifically, the method will query the oracle function $m$ times for each point $v\in V_d$ and will take the average of these values. Since the oracle function gives a noisy, but unbiased, estimate of the function, querying an individual point multiple times will reduce the variance that the oracle function imposes on the integral estimation. 

Now, when computing the estimation of the integral, let $\hat{f}(x)=\frac{1}{m}\sum_{i=1}^m\phi(x,f)$. Then the Gaussian Quadrature method uses the following formula to estimate the value of the integral.

$$\int_{x\in R} f(x) \approx r^d\sum_{v\in V_d} \hat{f}(v)$$

Note that, in the one-dimensional case, the Gaussian Quadrature method is exact for polynomials of degree up to 3. In the multi-dimensional case, we find that this formula is exact for polynomials which do not exceed degree up to 3 for any individual dimension. For instance, integrating the function $f(x_1,x_2,\ldots,x_d)=x_1^3 x_2^3\ldots x_d^3$ would still be exactly estimated by the multi-dimensional Gaussian Quadrature method.

\section{Main Results}
\label{sec:mainresults} 

In this section, we lay out the main results. Theorem \ref{theorem:general_lower_bound} is the main result of the paper and demonstrates the information-theoretic lower bound of the error of estimating integrals. Since the minimax error definition takes the supremum over all functions in the function class, the proof of this lower bound relies on the construction of a subclass of functions for which the lower bound holds, thus proving that the lower bound holds for the general class of functions as well. This approach of using a restricted ensemble is customary for information-theoretic lower bounds \cite{santhanam00,wang00,tandon00,ke00}.

The information-theoretic lower bound we achieved for the minimax error is $\Omega\left(2^d r^{d+1}\sqrt{d/T}\right)$ where $d$ is the number of dimensions of the input space, $r$ is the radius of an $l_\infty$ ball contained in the integration region, and $T$ is the number of queries a method may make to the oracle function. This means that, for an integration region of fixed size and dimension, there cannot exist a method of estimating integrals that achieves a convergence rate in $T$ faster than $\Omega(\sqrt{1/T})$.

\begin{theorem}\label{theorem:general_lower_bound}
For any class of $d$-dimensional functions $\mathcal{F}=\{f\mid f:S\rightarrow\mathbb{R}\}$ and any region of integration $S\subseteq\mathbb{R}^d$ containing an $l_\infty$ ball of radius $r$, there exists a constant $c$ such that the minimax error of estimating the integral using at most $T$ queries is upper bounded as 

$$\epsilon^*(\mathcal{F},\phi)\geq c2^dr^{d+1}\sqrt{\frac{d}{T}}$$

provided that $\sup_{x^*\in S}||x^*||_\infty\leq 2\sigma$ where $\sigma^2$ is the upper bound on the variance of the oracle.
\end{theorem}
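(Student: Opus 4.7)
The plan is to apply Fano's inequality over a restricted ensemble of functions, leveraging the observation (made earlier in the paper) that a lower bound on any subclass $\mathcal{G} \subseteq \mathcal{F}$ automatically yields a lower bound on $\epsilon^*(\mathcal{F},\phi)$. This follows the standard restricted-ensemble template for information-theoretic lower bounds referenced in the paper.

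First I would restrict attention to the $l_\infty$ ball $B = [-r,r]^d \subseteq S$ and partition it into $d$ equal-volume slabs $A_1,\ldots,A_d$ (each of volume $V = 2^d r^d / d$). For each sign pattern $\alpha \in \{-1,+1\}^d$, I would define the piecewise-constant function
\[ f_\alpha(x) = \delta \sum_{i=1}^d \alpha_i \mathbb{1}_{A_i}(x), \]
extended by zero outside $B$, with a scale parameter $\delta > 0$ to be tuned. The integral is $I_\alpha = \delta V \sum_i \alpha_i$, and since any two $f_\alpha, f_\beta$ differ pointwise by at most $2\delta$, a chain-rule computation for adaptive query strategies against a (Gaussian-worst-case) zero-order oracle of variance $\sigma^2$ yields $\mathrm{KL}(P_\alpha^T \,\|\, P_\beta^T) \leq 2T\delta^2/\sigma^2$ uniformly over pairs.

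Next I would extract a sub-packing $\mathcal{A} \subseteq \{-1,+1\}^d$ of cardinality $N = 2^{\Omega(d)}$ with pairwise integral gaps $|I_\alpha - I_\beta| \gtrsim \delta \cdot 2^d r^d$, and apply Fano's inequality: the average error probability exceeds $1/2$ as soon as $2T\delta^2/\sigma^2 \lesssim \log N = \Omega(d)$, i.e.\ when $\delta = \Theta(\sigma\sqrt{d/T})$. The minimax estimation error then inherits half the minimum integral separation within $\mathcal{A}$, producing $\Omega(\sigma \cdot 2^d r^d \sqrt{d/T})$. Invoking the side assumption $r \leq \sup_{x^*\in S}\|x^*\|_\infty \leq 2\sigma$ to absorb one factor of $\sigma$ into $r$ then gives exactly the claimed $\Omega(2^d r^{d+1}\sqrt{d/T})$.

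The main obstacle is the construction of the packing $\mathcal{A}$: one must exhibit $2^{\Omega(d)}$ sign vectors with the signed sums $|\sum_i(\alpha_i - \beta_i)|$ simultaneously bounded below by $\Omega(d)$ for every pair. Because these sums take only $d+1$ distinct values, a direct Hamming-distance packing is insufficient, and naive pairs can have vanishing integral gap. I would address this via a Gilbert--Varshamov-style random construction restricted to codewords with a prescribed Hamming-weight profile, or equivalently use an Assouad-type hypercube reduction that decomposes integral estimation into $d$ coordinate-wise two-point tests, thereby bypassing the packing issue altogether. Everything else reduces to routine balancing of $\delta$ against the KL budget.
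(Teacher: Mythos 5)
Your high-level architecture --- a restricted ensemble indexed by sign vectors $\alpha\in\{-1,+1\}^d$, a per-query KL bound of order $\delta^2$, Fano's inequality, and a final balancing of $\delta$ against the KL budget, with the variance condition used at the end to trade $\sigma$ for $r$ --- is the same as the paper's. The differences in the ensemble (piecewise-constant slab functions versus the paper's linear coordinate functions $g_\alpha(x)=\frac{\delta}{d}\sum_i\alpha_i h_i(x(i))$) and in the oracle (worst-case Gaussian versus the paper's coordinate-sampling Bernoulli oracle) are immaterial to the rate.

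The genuine gap is precisely the step you flag as ``the main obstacle'' and then do not close: the packing. In your ensemble $I_\alpha=\delta V\sum_i\alpha_i$ depends on $\alpha$ only through the integer $\sum_i\alpha_i\in\{-d,-d+2,\ldots,d\}$. Demanding pairwise gaps $|I_\alpha-I_\beta|\gtrsim\delta\,2^dr^d=\delta Vd$ forces $|\sum_i\alpha_i-\sum_i\beta_i|=\Omega(d)$, and since these sums range over an interval of width $2d$, at most $O(1)$ codewords can coexist; relaxing the gap to $\delta V$ allows at most $d+1$ codewords. Either way $\log N=O(\log d)$ rather than $\Omega(d)$, Fano only tolerates $\delta=O(\sigma\sqrt{\log d/T})$, and the $\sqrt{d}$ in the target bound is lost. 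Neither proposed repair works: restricting to a fixed Hamming-weight profile makes all the integrals \emph{equal}, which is the opposite of a packing; and an Assouad-type hypercube reduction requires the loss to be bounded below by a multiple of the Hamming distance $d_H(\alpha,\beta)$, whereas here $|I_\alpha-I_\beta|$ can vanish for pairs at Hamming distance $d/2$, so the decomposition into $d$ coordinate-wise two-point tests does not lower-bound the scalar loss $|I-\hat I|$. The underlying issue is that estimating the scalar $\sum_i\alpha_i$ to additive accuracy $\Theta(\delta d)$ is genuinely easier than identifying $\alpha$. The paper attempts to route around this via Lemmas \ref{lemma:unique_disc_bound} and \ref{lemma:alpha_estimator_upper_bound}, converting a good integral estimator into an identifier of $\alpha$ on the grounds that at most one $\int g_\alpha$ lies within $\frac{1}{3}\psi$ of any value $I$; but that conversion presupposes $\psi>0$ over a codebook of size $(2/\sqrt{e})^{d/2}$, which collides with the same obstruction (two codewords with equal coordinate sums have identical integrals). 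So your instinct about where the difficulty lies is correct, but your proposal does not overcome it, and the step cannot be dismissed as ``routine balancing.''
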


Next, we find that the sample complexity upper bound for the Gaussian Quadrature method is $O\left(2^dr^d\sigma/\sqrt{T}+2^dr^{d+5}\right)$ for functions with nonzero fourth derivatives with respect individual dimensions and $O\left(2^dr^d\sigma/\sqrt{T}\right)$ when the functions have zero fourth and higher-order derivatives. Thus we can conclude that, for functions with nonzero fourth and higher-order derivatives, the Gaussian Quadrature method does not achieve a tight upper bound, so it is possible to develop better integral estimation methods. (The Simpson's Rule method was also analyzed and achieved similar results).

\begin{theorem}\label{theorem:gq_upper_bound}
  If $\left|f_i^{(4)}(x)\right|\leq K$ for all $i\in\{1,\ldots,d\}$ and $x\in[-r,r]^d$, then the error for the Gaussian Quadrature method has the following upper bound.
  
  $$\epsilon(GQ,F,\phi)\leq \frac{2^{d+1}r^d\sigma}{\sqrt{T}}+\frac{2^{d+1} r^{d+5}}{6\cdot 45}K$$
  
  Likewise, if $K=0$, then we get the following upper bound.
  
  $$\epsilon(GQ,F,\phi)\leq \frac{2^{d+1}r^d\sigma}{\sqrt{T}}$$

\end{theorem}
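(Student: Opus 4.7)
The plan is to decompose the error $\mathbf{E}_\phi[|I - I_{GQ}|]$ into a stochastic oracle-noise term and a deterministic quadrature-bias term, control each independently, and recombine by the triangle inequality. Let $I_{\mathrm{det}} \coloneqq r^d \sum_{v \in V_d} f(v)$ denote what the Gaussian Quadrature rule would output if the oracle were noiseless, so that $|I - I_{GQ}| \leq |I - I_{\mathrm{det}}| + |I_{\mathrm{det}} - I_{GQ}|$. After taking expectation over $\phi$ and a supremum over $f$, it suffices to bound the two pieces separately.

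For the noise term, I would use the fact that $\mathbf{E}_\phi[\hat f(v)] = f(v)$, so $\mathbf{E}_\phi[I_{GQ}] = I_{\mathrm{det}}$, and that the $T = m\, 2^d$ oracle calls are mutually independent with variance at most $\sigma^2$. Hence $\mathrm{Var}(\hat f(v)) \leq \sigma^2/m$ and, by independence across the $2^d$ Gauss nodes, $\mathrm{Var}(I_{GQ}) \leq r^{2d} \cdot 2^d \sigma^2/m$. Jensen's inequality then gives $\mathbf{E}_\phi|I_{\mathrm{det}} - I_{GQ}| \leq \sqrt{\mathrm{Var}(I_{GQ})} \leq r^d\, 2^{d/2} \sigma/\sqrt{m}$, and substituting $m = T/2^d$ produces the $O(2^d r^d \sigma/\sqrt{T})$ piece that appears in both stated bounds.

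For the deterministic bias $|I - I_{\mathrm{det}}|$, I would exploit the tensor-product nature of $I_{\mathrm{det}}$ by swapping one coordinate at a time. Define intermediates $I_k$ in which the first $k$ coordinates are collapsed to their $2$-point Gauss-Legendre sum while the remaining $d-k$ coordinates are still integrated exactly, so that $I_0 = I$ and $I_d = I_{\mathrm{det}}$. The difference $I_{k+1} - I_k$ equals a sum over the $2^k$ already-discretized outer Gauss nodes of integrals over $[-r,r]^{d-k-1}$ of the univariate Gauss-Legendre error in coordinate $k+1$ applied to the slice $y \mapsto f(v_1,\ldots,v_k, y, x_{k+2},\ldots,x_d)$. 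The hypothesis $|f_i^{(4)}| \leq K$ makes the classical one-dimensional 2-point Gauss-Legendre error bound $|E_1| \leq K r^5/135$ applicable uniformly in the frozen coordinates, so each $|I_{k+1} - I_k|$ is bounded by $2^k \cdot r^k \cdot (2r)^{d-k-1} \cdot K r^5/135$. Telescoping $\sum_{k=0}^{d-1}|I_{k+1} - I_k|$ yields the stated bias bound. In the special case $K = 0$, every axis-aligned one-dimensional restriction of $f$ is a polynomial of degree at most $3$, which 2-point Gauss-Legendre integrates exactly; each $I_{k+1} - I_k$ then vanishes, so the bias term collapses and only the noise term survives.

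The main obstacle is the tensor-product bias step: arguing that $|f_i^{(4)}| \leq K$ uniformly bounds every univariate Gauss-Legendre error regardless of whether the other coordinates have been frozen to Gauss nodes from earlier rounds or to dummy integration variables from later rounds, and keeping careful track of the $2^k$-sum versus $(2r)^{d-k-1}$-volume factors as one telescopes through the dimensions. Once that bookkeeping is done, the rest of the proof is just plugging the $1/135$ Gauss-Legendre error constant into the telescoping sum and combining with the noise bound via the triangle inequality.
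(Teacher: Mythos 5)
Your decomposition into a noise term and a deterministic bias term is exactly the paper's decomposition (the paper writes the bias as $S$ and adds it inside the expectation), and your treatment of the noise term is fine --- in fact cleaner: Jensen's inequality applied to $\mathrm{Var}(I_{GQ})\leq 2^d r^{2d}\sigma^2/m$ gives $2^{d/2}r^d\sigma/\sqrt{m}=2^d r^d\sigma/\sqrt{T}$, a factor of $2$ better than the paper's Chebyshev-plus-layer-cake route, so it implies the stated $2^{d+1}r^d\sigma/\sqrt{T}$ term, and the $K=0$ case follows as you say. The genuine issue is in the bias term. Your telescoping argument is sound, but carry out the arithmetic: each increment is bounded by $2^k r^k\,(2r)^{d-k-1}\,Kr^5/135=2^{d-1}r^{d+4}K/135$, and summing over $k=0,\dots,d-1$ gives $d\,2^{d-1}r^{d+4}K/135$. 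The theorem claims $2^{d+1}r^{d+5}K/(6\cdot 45)=2^{d}r^{d+5}K/135$. The ratio of your bound to the stated one is $d/(2r)$, so for $d>2r$ your argument does \emph{not} recover the stated constant. The paper avoids the factor of $d$ by a different route (Lemma \ref{lemma:gq_error_term}): it asserts via the Hermite interpolation error formula that at every point $x$ the pointwise error $|f(x)-p_3(x)|$ equals a \emph{single} univariate fourth-derivative term $|f_i^{(4)}(\xi)|\,c/4!$ with $c=8r^5/45$, and then integrates over the volume $(2r)^d$, which produces a $d$-independent bias bound.

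You should be aware, however, that your per-dimension accounting appears to be the honest one. Take $f(x)=\sum_{i=1}^d x_i^4$, so $K=24$: the exact bias of the tensor-product rule is $d\,2^d r^{d+4}(1/5-1/9)=4d\,2^d r^{d+4}/45$, which matches your telescoping bound exactly and exceeds the theorem's stated bias term $8\cdot 2^d r^{d+5}/45$ whenever $d>2r$. So the gap is not looseness in your telescoping; it reflects the fact that errors from the $d$ coordinate directions accumulate additively, which the paper's Lemma \ref{lemma:gq_error_term} suppresses by identifying the multivariate interpolation error with a single univariate Hermite error. As a proof of the theorem \emph{as stated} your proposal therefore falls short by the factor $d/(2r)$; as a derivation of a correct bias bound for tensor-product Gaussian quadrature it is the more defensible argument, and you would need to either restrict to $d\leq 2r$ or restate the bias term with the explicit factor of $d$.
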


In order to prove Theorem \ref{theorem:gq_upper_bound}, we first proved two additional lemmas. First, using the fact that the one-dimensional Gaussian Quadrature method is exact for polynomials of degree up to 3, we determine that the multi-dimensional extension of Gaussian Quadrature is also exact for polynomials of degree up to 3 with respect to each dimension. Note that this holds when the method has access to non-noisy values of the integrand function.

\begin{lemma}\label{lemma:gq_exact}
If f is a polynomial of degree at most 3, then a non-noisy estimation from the Gaussian Quadrature method will exactly estimate the integral. That is,
    $$\int_{x\in R} f(x)=r^d \sum_{v\in V_d}f(v)$$
\end{lemma}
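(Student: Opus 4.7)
The plan is to reduce the multi-dimensional claim to the one-dimensional exactness of Gaussian quadrature, which is stated in the preliminaries, using the fact that both the left-hand side (an iterated integral over a product region) and the right-hand side (a sum over a product grid) factor along the coordinate axes.

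First I would invoke linearity: both $\int_{R}(\cdot)$ and $r^d\sum_{v\in V_d}(\cdot)$ are linear functionals on the space of polynomials, so it suffices to verify the identity for a basis of monomials. The relevant basis is $\{x_1^{a_1}x_2^{a_2}\cdots x_d^{a_d}\}$ with each $a_i\in\{0,1,2,3\}$, which spans the class of polynomials that are of degree at most $3$ in each individual variable (the class that the informal discussion surrounding the lemma in fact describes; the monomial $x_1^3 x_2^3\cdots x_d^3$ mentioned just before the lemma lies in this basis).

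Next I would exploit the product structure of $R=[-r,r]^d$ and $V_d=\{-r/\sqrt{3},+r/\sqrt{3}\}^d$. For a monomial $\prod_i x_i^{a_i}$, Fubini gives
$$\int_R \prod_{i=1}^d x_i^{a_i}\,dx=\prod_{i=1}^d\int_{-r}^{r}x^{a_i}\,dx,$$
and the grid sum factors as
$$r^d\sum_{v\in V_d}\prod_{i=1}^d v_i^{a_i}=\prod_{i=1}^d\left(r\sum_{s\in\{\pm r/\sqrt{3}\}}s^{a_i}\right).$$
Thus the $d$-dimensional identity reduces to verifying, for every $a\in\{0,1,2,3\}$, the one-dimensional equality
$$\int_{-r}^{r}x^{a}\,dx = r\bigl((-r/\sqrt{3})^a+(r/\sqrt{3})^a\bigr),$$
which is precisely the statement that one-dimensional Gaussian quadrature with two nodes is exact for polynomials of degree at most $3$, recalled in the preliminaries. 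I would confirm this by a brief direct check (both sides vanish for $a=1,3$ by antisymmetry, and the cases $a=0,2$ reduce to $2r=2r$ and $\tfrac{2r^3}{3}=\tfrac{2r^3}{3}$ respectively).

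There is no serious obstacle here; the only point that requires care is ensuring the factorization step above is applied to monomials of the correct class. In particular, one has to note that the set of polynomials that are degree $\leq 3$ in each variable is strictly larger than those of total degree $\leq 3$, and it is the former for which the product-of-1D argument goes through cleanly; monomials of total degree $\leq 3$ are a subset, so the stated lemma follows as a corollary either way.
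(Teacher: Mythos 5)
Your proposal is correct, but it takes a different route from the paper. The paper proves the lemma by induction on the dimension: the base case is the one-dimensional exactness of two-point Gaussian quadrature, and the inductive step replaces the innermost $n-1$ integrals by the $(n-1)$-dimensional quadrature sum (with $x_n$ held fixed) and then applies the one-dimensional rule once more to the resulting univariate integral in $x_n$. You instead use linearity to reduce to monomials $\prod_i x_i^{a_i}$ with $a_i \in \{0,1,2,3\}$ and then exploit the tensor-product structure of $R=[-r,r]^d$ and $V_d$ to factor both the integral (via Fubini) and the quadrature sum into products of one-dimensional quantities, at which point the same one-dimensional exactness finishes the job. Both arguments bottom out in the identity $\int_{-r}^{r} x^a\,dx = r\bigl((r/\sqrt{3})^a+(-r/\sqrt{3})^a\bigr)$ for $a\le 3$. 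What your factorization buys is transparency about the exactness class: it is immediately visible that the rule is exact for all polynomials of degree at most $3$ \emph{in each variable separately}, which is the class the paper's surrounding discussion describes, whereas the paper's induction leaves implicit that the inductive hypothesis must apply to $f(\cdot,x_n)$ and that $\sum_{v\in V_{n-1}} f(v,x_n)$ remains degree $\le 3$ in $x_n$ (both of which hold exactly for the per-variable degree class). What the induction buys is a template that the paper reuses verbatim for Simpson's rule with its non-uniform product weights $w_d(v)$; your argument adapts there too, but requires noting that those weights also factor as a product over coordinates.
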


Additionally, once we know that Gaussian Quadrature is exact for polynomials of degree up to 3, we then use the error formula for Hermite Interpolation to find an error term for non-noisy Gaussian Quadrature estimations. This formula allows us to find an upper bound on the sample complexity error of the Gaussian Quadrature method when it only has access to noisy function values, thus proving Theorem \ref{theorem:gq_upper_bound}.

\begin{lemma}\label{lemma:gq_error_term}

A non-noisy estimation from the Gaussian Quadrature method will achieve an error term with the following upper bound.

  $$\left|\int_{x\in R} f(x)-r^d\sum_{v\in V_d}f(v)\right|\leq \frac{c2^d r^d}{4!}\sup_{i,x^*}\left|f_i^{(4)}(\xi(x^*))\right|$$
  
In this bound, the supremum is considering the maximum fourth derivative with respect to the $i^{th}$ dimension where $\xi(x)$ is a point determined by the error formula for Hermite Interpolation. 

\end{lemma}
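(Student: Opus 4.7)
The plan is to reduce the multi-dimensional quadrature error to a sum of one-dimensional Hermite-interpolation errors through a telescoping decomposition along the coordinate axes. First I would set up notation by letting $I_i$ denote integration over the $i$-th coordinate on $[-r,r]$ and $Q_i$ denote the corresponding one-dimensional two-point Gaussian Quadrature operator (evaluation at $\pm r/\sqrt{3}$ with weight $r$ each). Then the left-hand side of the lemma equals $\lvert(I_1 I_2 \cdots I_d - Q_1 Q_2 \cdots Q_d)f\rvert$, and the composite $r^d\sum_{v\in V_d}$ arises naturally from composing the $Q_i$ over all $i$.

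Next I would invoke the classical one-dimensional Hermite interpolation error formula: for any smooth univariate $g$ on $[-r,r]$, the cubic Hermite interpolant $p$ matching $g$ and $g'$ at $\pm r/\sqrt{3}$ satisfies
$$g(x) - p(x) = \frac{g^{(4)}(\xi(x))}{4!}\bigl(x^2 - r^2/3\bigr)^2$$
for some $\xi(x)\in[-r,r]$. Since two-point Gauss--Legendre quadrature is exact on cubics (the mechanism already used in Lemma \ref{lemma:gq_exact}), $(I_i - Q_i)g$ equals $\int_{-r}^{r}(g-p)\,dx_i$, whose absolute value is at most $\frac{\sup|g^{(4)}|}{4!}\int_{-r}^{r}(x^2 - r^2/3)^2\,dx = \frac{8r^5}{45\cdot 4!}\sup|g^{(4)}|$. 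Applied slicewise to $f$, this controls $\lvert(I_i - Q_i)f\rvert$ pointwise in the remaining coordinates by the fourth partial derivative of $f$ in direction $i$.

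I would then telescope:
$$I_1\cdots I_d - Q_1\cdots Q_d = \sum_{i=1}^{d} Q_1\cdots Q_{i-1}\,(I_i - Q_i)\,I_{i+1}\cdots I_d.$$
For each summand, the middle factor contributes the 1D Hermite bound on the $i$-th slice, while each remaining $I_j$ or $Q_j$ aggregates values with total weight $2r$, so the composition of the $d-1$ outer operators multiplies by at most $(2r)^{d-1}$. Thus each summand is at most $(2r)^{d-1}\cdot\frac{8r^5}{45\cdot 4!}\sup_{i,x^*}\lvert f_i^{(4)}(\xi(x^*))\rvert$, and summing over $i=1,\ldots,d$ yields a bound of the form $\frac{c\cdot 2^d r^d}{4!}\sup_{i,x^*}\lvert f_i^{(4)}(\xi(x^*))\rvert$, where the constant $c$ absorbs the $r^5$, the $8/45$, and the combinatorial factor of $d$.

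The main obstacle is cleanly tracking the Hermite mean-value point $\xi$ through the telescoping: the point produced by the 1D error formula depends on the whole slice being interpolated, and after subsequent $Q_j$ and $I_j$ act one must pass to a uniform estimate of the fourth partial derivatives over $[-r,r]^d$. The cleanest resolution is to uniformly bound $\lvert f_i^{(4)}\rvert$ on the cube before telescoping, so that the outer $\sup_{i,x^*}$ in the statement can be extracted once, at the very end of the argument.
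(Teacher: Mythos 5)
Your proof is correct but takes a genuinely different route from the paper's. The paper posits a single multivariate polynomial $p_3$ that matches $f$ at all $2^d$ nodes and whose integral equals the quadrature sum, writes the error as $\int_{x\in R}(f-p_3)$, bounds $|f(x)-p_3(x)|$ pointwise by a one-dimensional Hermite error along a single coordinate slice, and then multiplies by the volume $(2r)^d$. You instead use the tensor-product telescoping $I_1\cdots I_d - Q_1\cdots Q_d=\sum_{i=1}^{d} Q_1\cdots Q_{i-1}(I_i-Q_i)I_{i+1}\cdots I_d$, bound each one-dimensional factor by the \emph{integrated} Hermite error $\frac{8r^5}{45\cdot 4!}\sup|f_i^{(4)}|$, and use positivity of the outer operators (each of mass $2r$) to pick up $(2r)^{d-1}$; the only implicit step is interchanging $\partial_i^4$ with the outer integrations, which is harmless. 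Your argument is the more careful of the two: it sidesteps the paper's unaddressed question of whether a global $p_3$ exists that simultaneously restricts to the one-dimensional Hermite interpolant on every coordinate slice through every point. The price is the constant: summing $d$ terms of size $(2r)^{d-1}\cdot\frac{8r^5}{45\cdot 4!}\sup_{i,x^*}|f_i^{(4)}(\xi(x^*))|$ gives $c=\frac{4dr^4}{45}$ in place of the paper's $c=\frac{8r^5}{45}$, a ratio of $d/(2r)$. This still has the advertised form $\frac{c\,2^dr^d}{4!}\sup_{i,x^*}|f_i^{(4)}(\xi(x^*))|$, but it would propagate an extra factor of $d$ (and one fewer power of $r$) into the second term of the bound in Theorem \ref{theorem:gq_upper_bound}, so the two versions of the lemma are not interchangeable downstream even though both are valid upper bounds.
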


Finally, we consider the Gaussian Quadrature method for a Gaussian oracle function with variance $\sigma^2$, and we determine that, for functions with zero fourth and higher-order derivatives, the above rates for Gaussian Quadrature are tight. As such, for a fixed variance Gaussian oracle, we have that the Gaussian Quadrature method achieves an error rate that is tight with the information-theoretic lower bound. This formula for the Gaussian Quadrature error with a Gaussian oracle was also verified experimentally by generating random polynomials and computing the average error produced by the Gaussian Quadrature method for different values of $T$, the number of queries.

\begin{theorem}\label{theorem:gq_gaussian_oracle}
  Let $\phi$ be a Gaussian oracle function with variance $\sigma^2$. If $\left|f_i^{(4)}(x)\right|\leq K$ for all $i\in\{1,\ldots,d\}$ and $x\in[-r,r]^d$, then the error for the Gaussian Quadrature method with oracle function $\phi$ has the following formula
  
  \begin{align*}\epsilon(GQ,F,\phi) = &\frac{2^{d}r^d\sigma}{\sqrt{T}}\exp\left(-\frac{c_1 2^d }{\sigma^2}\right)\sqrt{\frac{2}{\pi}}+\\
  &\frac{c_2 2^{3d/2} r^d}{\sqrt{T}}\text{erf}\left(\frac{c_3 2^{d/2}}{\sigma}\right)
  \end{align*}
  
  where erf is the Gauss error function and $c_1$, $c_2$, and $c_3$ are constants such that $c_1\in[0,r^{10}/(6\cdot 45)]$, $c_2\in[0,r^5/(3\cdot 45)]$, and $c_3\in[0,r^5/(3\cdot 45\sqrt{2})]$.
  
  Likewise, if $K=0$, then we get the following explicit formula for the error.
  
  $$\epsilon(GQ,F,\phi)=\frac{2^{d}r^d\sigma}{\sqrt{T}}\sqrt{\frac{2}{\pi}}$$

\end{theorem}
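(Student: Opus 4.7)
The plan is to decompose the total error $I - I_{GQ}$ into a deterministic quadrature-error term plus an independent Gaussian noise term, and then compute the expectation exactly via the folded-normal mean. Writing
$$I - I_{GQ} = \bigl(I - r^d \textstyle\sum_{v \in V_d} f(v)\bigr) - r^d \textstyle\sum_{v \in V_d} \bigl(\hat{f}(v) - f(v)\bigr),$$
the first parenthesized quantity is a deterministic scalar $E_{det}(f)$ controlled by Lemma \ref{lemma:gq_error_term}, while the second is a sum of independent zero-mean Gaussians: because $\phi(\cdot,f)$ is $N(f(v),\sigma^2)$ and the $m = T/2^d$ oracle calls at each of the $2^d$ points of $V_d$ are mutually independent, $\hat{f}(v) - f(v) \sim N(0,\sigma^2/m)$ and the summands are independent across $V_d$.

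Next, I would collect the noise into a single Gaussian. Summing the $2^d$ independent $N(0,\sigma^2/m)$ terms and scaling by $r^d$ yields $r^d \sum_{v} (\hat f(v) - f(v)) \sim N\bigl(0,\, 2^{2d} r^{2d} \sigma^2/T\bigr)$, with standard deviation $s := 2^d r^d \sigma / \sqrt{T}$. Hence $I - I_{GQ} \sim N(E_{det}(f), s^2)$, and the expected absolute error is the classical folded-normal mean
$$\mathbf{E}_\phi\bigl[|I - I_{GQ}|\bigr] = s\sqrt{\tfrac{2}{\pi}}\exp\!\bigl(-E_{det}(f)^2/(2s^2)\bigr) + |E_{det}(f)|\,\text{erf}\!\bigl(|E_{det}(f)|/(s\sqrt{2})\bigr).$$

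For the $K = 0$ branch, the hypothesis that every pure fourth partial derivative vanishes forces $f$ to be a polynomial of degree at most $3$ in each individual variable, so Lemma \ref{lemma:gq_exact} yields $E_{det}(f) = 0$ exactly, the $\text{erf}$ term drops, and the expression collapses to $\frac{2^d r^d \sigma}{\sqrt T}\sqrt{2/\pi}$, the claimed closed form. For the general $K$ case, I would substitute the bound on $|E_{det}(f)|$ supplied by Lemma \ref{lemma:gq_error_term} (whose $r^5/135$ factor is the standard two-point Gauss--Legendre remainder on each $[-r,r]$), and then \emph{read off} $c_1, c_2, c_3$ by matching the exponent $E_{det}(f)^2/(2s^2)$ to $c_1 \, 2^d/\sigma^2$, the prefactor $|E_{det}(f)|$ to $c_2\, 2^{3d/2} r^d/\sqrt T$, and the $\text{erf}$-argument $|E_{det}(f)|/(s\sqrt{2})$ to $c_3\, 2^{d/2}/\sigma$; the quoted ranges for the $c_i$ then reduce to routine algebraic substitution of the Lemma's bound.

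The main obstacle I anticipate is purely the bookkeeping in this last step. Because the folded-normal mean is nonlinear in $E_{det}$, one has to verify that the single Lemma \ref{lemma:gq_error_term} bound on $|E_{det}(f)|$ simultaneously implies all three of the stated constant-ranges, and that the $T$- and $d$-dependence in the error formula is cleanly absorbed into $s$ and into the $c_i$ windows as advertised. Once the decomposition into deterministic plus Gaussian noise is in hand, everything that remains is standard Gaussian calculus.
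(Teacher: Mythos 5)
Your decomposition of the error into a deterministic quadrature term $E_{det}(f)$ plus an independent Gaussian noise term with standard deviation $s = 2^d r^d\sigma/\sqrt{T}$, followed by the folded-normal mean, is exactly the route the paper takes, and your treatment of the $K=0$ branch is complete and correct.

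However, for the general-$K$ branch there is a genuine gap precisely at the step you flag as ``bookkeeping.'' Lemma \ref{lemma:gq_error_term} bounds $\left|E_{det}(f)\right|$ by $\frac{c\,2^d r^d}{4!}K$ with $c\leq 8r^5/45$ --- a quantity with \emph{no} dependence on $T$ (equivalently on $m$). If you substitute this into the folded-normal mean, the second term $\left|E_{det}\right|\,\mathrm{erf}\!\left(\left|E_{det}\right|/(s\sqrt{2})\right)$ has a prefactor that does not decay in $T$ at all, and the exponent $E_{det}^2/(2s^2) = E_{det}^2\,T/(2\cdot 2^{2d}r^{2d}\sigma^2)$ \emph{grows} linearly in $T$. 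Matching these to the theorem's $c_2 2^{3d/2}r^d/\sqrt{T}$ and $c_1 2^d/\sigma^2$ with constants $c_i$ confined to fixed ranges is therefore impossible: you would be forced to take $c_2\propto\sqrt{m}$ and $c_1\propto m$. The paper closes this gap only by inserting an additional hypothesis inside the proof --- it restricts to the regime $K=O(1/\sqrt{m})$ and assumes the deterministic error is exactly $S=\frac{c\,2^d r^d}{4!\sqrt{m}}$ --- so that the extra $1/\sqrt{m}$ cancels the $T$-dependence and the stated constant windows emerge. That assumption appears nowhere in the theorem statement and nowhere in your proposal, so your plan as written cannot produce the claimed formula; you would either need to import the same $K=O(1/\sqrt{m})$ scaling explicitly, or accept a formula in which the erf term and the exponential retain explicit $T$-dependence.
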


\section{Proof of Theorem \ref{theorem:general_lower_bound}: Infor\-mation-Theoretic Lower Bound}
\label{sec:proof}

In this section, we provide the proof for Theorem \ref{theorem:general_lower_bound}. Since the theorem holds for a class of functions $\mathcal{F}$, we start by defining a subclass of functions $\mathcal{G}(\delta,h)$ that are parameterized by a discrete set of vectors. Then by proving that estimating the integral is as hard as determining the discrete-valued parameters, we can apply Fano's Inequality to get a lower bound on the subclass of functions $\mathcal{G}(\delta,h)$, which must, therefore, hold as a lower bound for the general class of functions $\mathcal{F}$.

\subsection{Defining Function Space}

We define $\mathcal{V}\subseteq\{-1,+1\}^d$ such that, for any $\alpha,\beta\in\mathcal{V}$, if $\alpha\ne\beta$, then

$$\sum_{i=1}^d 1\left[\alpha_i\ne\beta_i\right]\geq d/4$$

Then it is possible to construct a set $\mathcal{V}$ with cardinality

\begin{equation}\label{eq:linear_space_cardinality}
  \left|\mathcal{V}\right|\geq(2/\sqrt{e})^{d/2}
\end{equation}

We now define a set of functions $\mathcal{G}(\delta,h)$ parameterized by $\alpha\in\mathcal{V}$ with $\delta\in\mathbb{R}$ and $h_i:\mathbb{R}\rightarrow\mathbb{R}$ such that $g_\alpha\in\mathcal{G}(\delta,h)$ is defined as

$$g_\alpha(x)=\frac{\delta}{d}\sum_{i=1}^d\alpha_ih_i(x(i))=\frac{\delta}{d}\langle\alpha,H(x)\rangle$$

where $x(i)$ is the $i^{th}$ coordinate of $x\in\mathbb{R}^d$ and $H(x)=[h_1(x(1)),h_2(x(2)),\ldots,h_d(x(d))]^T$. Note that $\mathcal{G}\subset\mathcal{F}$ in this case, and the $h_i$ are left as arbitrary functions for now to allow different options to potentially yield different lower bounds. However, for this proof, they will be linear functions.

Additionally, we restrict the function $h$ by requiring that it satisfies Fubini's Theorem. That is, we require the following constraint. 

$$\frac{\delta}{d}\int_{x\in R}\sum_{i=1}^d\left|\alpha_i h_i(x(i))\right|<\infty$$

Therefore, we can apply Fubini's Theorem to get the following equality.

$$\int_{x\in R} g_\alpha(x)=\frac{\delta}{d}\int_{x\in R}\sum_{i=1}^d\alpha_i h(x(i))= \frac{\delta}{d}\sum_{i=1}^d\int_{x\in R}\alpha_i h_i(x(i))$$

\subsection{Minimum Distance between Functions in the Class}

We then let $\psi(\mathcal{G}(\delta,h))$ denote the discrepancy in the absolute value between the integral of any two distinct functions in $\mathcal{G}(\delta,h)$ over the region $R$. Let $\alpha\ne\beta$ for $\alpha,\beta\in\mathcal{V}$. We define the discrepancy as

$$\psi(\mathcal{G}(\delta,h)):=\inf_{\alpha,\beta\in\mathcal{V}}\inf_R\left|\int_{x\in R} g_\alpha(x)-\int_{x\in R} g_\beta(x)\right|$$

Then note that

\begin{equation*}\begin{split}
  \psi(\mathcal{G}(\delta,h)) 
    &= \inf_{\alpha,\beta\in\mathcal{V}}\inf_R\frac{\delta}{d}\sum_{i=1}^d\left|(\alpha_i-\beta_i)\int_{x\in R} h_i(x(i))\right|\\
    &\geq \inf_{\alpha,\beta\in\mathcal{V}}\inf_R\frac{2\delta}{d}\sum_{i=1}^d 1[\alpha_i\ne\beta_i]\inf_i\left|\int_{x\in R} h_i(x(i))\right|\\
    &\geq \frac{\delta}{2}\inf_R\inf_i\left|\int_{x\in R} h_i(x(i))\right|\\
\end{split}\end{equation*}

Thus
\begin{equation}\label{eq:disc_bound}
  \psi(\mathcal{G}(\delta,h))\geq \frac{\delta}{2}\inf_R\inf_i\left|\int_{x\in R} h_i(x(i))\right|
\end{equation}

Therefore, for any $\alpha,\beta\in\mathcal{V}$ with $\alpha\ne\beta$, we have 

$$\left|\int_{x\in R} g_\alpha(x)-\int_{x\in R} g_\beta(x)\right|\geq\frac{\delta}{2}\inf_R\inf_i\left|\int_{x\in R} h_i(x(i))\right|$$

Now, by using this bound, we show that, for any real value $I$, there can exist at most one function $g_\alpha$ such that $\int_R g_\alpha(x)$ is contained within an $l_1$ ball with radius equal to $1/3$ of the discrepancy.

\begin{lemma}\label{lemma:unique_disc_bound}
If
  \begin{equation}\label{eq:inf_lower_bound}
    \inf_R\inf_i\left|\int_{x\in R} h_i(x(i))\right|>0
  \end{equation}
then, for any valid region $R$, and $I=\int_{x\in R}f(x)$, there can be at most one $\alpha\in\mathcal{V}$ such that
  \begin{equation}\label{eq:unique_disc_bound}
    \left|I-\int_{x\in R} g_\alpha(x)\right|\leq\frac{1}{3}\psi(\mathcal{G}(\delta,h))
  \end{equation}
\end{lemma}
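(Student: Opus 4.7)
The plan is to proceed by contradiction using the triangle inequality, treating the inequality (\ref{eq:unique_disc_bound}) as a ``packing'' condition around the real number $I$.

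First, I would suppose that there exist two distinct $\alpha, \beta \in \mathcal{V}$ with $\alpha \neq \beta$ such that both satisfy (\ref{eq:unique_disc_bound}). Using the triangle inequality on the real line, I would then write
\[
\left|\int_{x\in R} g_\alpha(x) - \int_{x\in R} g_\beta(x)\right|
\leq \left|I - \int_{x\in R} g_\alpha(x)\right| + \left|I - \int_{x\in R} g_\beta(x)\right|
\leq \frac{2}{3}\,\psi(\mathcal{G}(\delta,h)).
\]
On the other hand, by the definition of $\psi(\mathcal{G}(\delta,h))$ as the infimum over pairs of distinct parameters (and over regions), the left-hand side is bounded \emph{below} by $\psi(\mathcal{G}(\delta,h))$ for the region $R$ at hand. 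Combining these two directions yields $\psi(\mathcal{G}(\delta,h)) \leq \tfrac{2}{3}\,\psi(\mathcal{G}(\delta,h))$, i.e., $\psi(\mathcal{G}(\delta,h)) \leq 0$.

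Here the hypothesis (\ref{eq:inf_lower_bound}) does the essential work: applying the lower bound (\ref{eq:disc_bound}) gives $\psi(\mathcal{G}(\delta,h)) \geq \tfrac{\delta}{2}\,\inf_R\inf_i\left|\int_{x\in R} h_i(x(i))\right| > 0$, which contradicts the inequality just derived. Therefore at most one such $\alpha$ can exist. The main (very mild) obstacle is simply being careful that the infimum in the definition of $\psi$ is taken over all valid regions, so the bound $|\int g_\alpha - \int g_\beta| \geq \psi$ is legitimate for the specific $R$ chosen; since the infimum over $R$ is a lower bound for every particular $R$, this step is immediate. No delicate calculation is needed — the whole argument is triangle-inequality packing combined with the strict positivity of the discrepancy.
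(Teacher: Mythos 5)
Your proposal is correct and follows essentially the same route as the paper's proof: assume two distinct $\alpha,\beta\in\mathcal{V}$ both satisfy \eqref{eq:unique_disc_bound}, apply the triangle inequality to get $\psi(\mathcal{G}(\delta,h))\leq\frac{2}{3}\psi(\mathcal{G}(\delta,h))$, and contradict this with the strict positivity of $\psi$ coming from \eqref{eq:disc_bound} and \eqref{eq:inf_lower_bound}. No substantive difference from the paper's argument.
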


For a proof of this lemma, see section \ref{sec:lower-lemmas} in the appendix.

\subsection{Upper Bounding Probability of Estimator being Wrong}

Next, if the assumption in Lemma \ref{lemma:unique_disc_bound} holds, then we can claim that, if a model $\mathcal{M}$ can achieve a minimax error bounded as

\begin{equation}\label{eq:model_error_bound}
  \mathbb{E}_\phi\left[\epsilon(\mathcal{M},\mathcal{G}(\delta,h),\phi)\right]\leq\frac{1}{9}\psi(\mathcal{G}(\delta,h))
\end{equation}

then that model $\mathcal{M}$ can output a value $\hat{\alpha}(\mathcal{M})$ to be the $\alpha\in\mathcal{V}$ where $\left|I-\int_{x\in R} g_\alpha(x)\right|\leq\frac{1}{3}\psi(\mathcal{G}(\delta,h))$ if such an $\alpha$ exists, and if no such $\alpha$ exists, then the model chooses uniformly at random from $\mathcal{V}$. Note that Lemma \ref{lemma:unique_disc_bound} ensures that either one or zero such $\alpha$'s exists. Then we can now use Markov's inequality to prove that such an output from the model is wrong at most $\frac{1}{3}$ of the time. 

\begin{lemma}\label{lemma:alpha_estimator_upper_bound}
  If the assumptions in Lemma \ref{lemma:unique_disc_bound} hold, then if a model $\mathcal{M}$ satisfies inequality \eqref{eq:model_error_bound}, it can construct an estimator $\hat{\alpha}(\mathcal{M})$ to estimate the true $\alpha$ with an error upper bounded as
  
  $$\max_{\alpha^*\in\mathcal{V}}\textbf{P}_\phi\left[\hat{\alpha}(\mathcal{M})\ne\alpha^*\right]\leq\frac{1}{3}$$
  
  which implies that the model will only fail to retrieve the correct $g_\alpha$ with probability at most $\frac{1}{3}$.
\end{lemma}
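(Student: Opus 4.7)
The plan is to construct $\hat{\alpha}(\mathcal{M})$ exactly as described in the paragraph preceding the lemma, and then reduce the event $\{\hat{\alpha}(\mathcal{M})\ne\alpha^*\}$ to a large-deviation event on the model's integration error, which is controlled by Markov's inequality. Fix an arbitrary $\alpha^*\in\mathcal{V}$ and write $I^*=\int_{x\in R}g_{\alpha^*}(x)$ for the true integral and $I_\mathcal{M}$ for the model's estimate. The first step is a deterministic observation: if $|I_\mathcal{M}-I^*|\le\frac{1}{3}\psi(\mathcal{G}(\delta,h))$, then applying Lemma \ref{lemma:unique_disc_bound} with $I=I_\mathcal{M}$ shows that $\alpha^*$ is the \emph{unique} element of $\mathcal{V}$ whose integral lies within $\frac{1}{3}\psi(\mathcal{G}(\delta,h))$ of $I_\mathcal{M}$, so by construction the estimator returns $\hat{\alpha}(\mathcal{M})=\alpha^*$.

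Taking the contrapositive, the event $\{\hat{\alpha}(\mathcal{M})\ne\alpha^*\}$ is contained in $\{|I_\mathcal{M}-I^*|>\tfrac{1}{3}\psi(\mathcal{G}(\delta,h))\}$, so
$$\textbf{P}_\phi[\hat{\alpha}(\mathcal{M})\ne\alpha^*]\le\textbf{P}_\phi\!\left[|I_\mathcal{M}-I^*|>\tfrac{1}{3}\psi(\mathcal{G}(\delta,h))\right].$$
Since $|I_\mathcal{M}-I^*|$ is a nonnegative random variable, Markov's inequality gives
$$\textbf{P}_\phi\!\left[|I_\mathcal{M}-I^*|>\tfrac{1}{3}\psi(\mathcal{G}(\delta,h))\right]\le\frac{3\,\textbf{E}_\phi[|I_\mathcal{M}-I^*|]}{\psi(\mathcal{G}(\delta,h))}.$$
Because $g_{\alpha^*}\in\mathcal{G}(\delta,h)$, the supremum in the definition of $\epsilon(\mathcal{M},\mathcal{G}(\delta,h),\phi)$ upper bounds $\textbf{E}_\phi[|I_\mathcal{M}-I^*|]$, which by hypothesis \eqref{eq:model_error_bound} is at most $\tfrac{1}{9}\psi(\mathcal{G}(\delta,h))$. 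Chaining these bounds yields $\textbf{P}_\phi[\hat{\alpha}(\mathcal{M})\ne\alpha^*]\le\tfrac{1}{3}$, and since $\alpha^*\in\mathcal{V}$ was arbitrary, taking the maximum over $\mathcal{V}$ gives the stated inequality.

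The main obstacle is conceptual rather than computational: I must be careful that Lemma \ref{lemma:unique_disc_bound} applies with the \emph{random} quantity $I_\mathcal{M}$ in place of the generic real value $I$ (the lemma is stated pointwise for any real number, so this is immediate once we condition on the realization of the oracle), and I must confirm that the uniform-random fallback in the definition of $\hat{\alpha}(\mathcal{M})$ does not invalidate the reduction. The fallback is triggered only when no $\alpha\in\mathcal{V}$ has integral within $\frac{1}{3}\psi(\mathcal{G}(\delta,h))$ of $I_\mathcal{M}$, and in particular not $\alpha^*$, so $|I_\mathcal{M}-I^*|>\tfrac{1}{3}\psi(\mathcal{G}(\delta,h))$ already holds; hence the failure event is absorbed into the Markov bound regardless of how the tie is broken.
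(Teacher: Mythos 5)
Your proposal is correct and follows essentially the same route as the paper: reduce the misidentification event to the event that the integration error exceeds $\tfrac{1}{3}\psi(\mathcal{G}(\delta,h))$ via the uniqueness guarantee of Lemma \ref{lemma:unique_disc_bound}, then apply Markov's inequality together with the hypothesis \eqref{eq:model_error_bound}. Your write-up is in fact slightly more careful than the paper's, since it makes explicit that Markov's inequality is applied to the random variable $\left|I_\mathcal{M}-I^*\right|$ (whose expectation is dominated by the maximum risk) rather than to the deterministic quantity $\epsilon(\mathcal{M},\mathcal{G}(\delta,h),\phi)$, and it explicitly checks that the uniform-random fallback case is absorbed into the same tail event.
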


For a proof of this lemma, see section \ref{sec:lower-lemmas} in the appendix.

\subsection{Defining an Oracle}

In this section, we define a specific oracle to be considered with our function class, inspired from \cite{Agarwal00} in the context of convex optimization. Let the oracle $\phi$ be defined in the following way. When a point $x_t$ is queried, the oracle chooses a dimension $i\in\{1,\ldots,d\}$ uniformly at random and generates $b$ from a Bernoulli distribution with parameter $p=1/2+\alpha_i\delta$. It then returns the following value. 

$$\phi(x_t,g_\alpha)=b\cdot \frac{h_i(x_t(i))}{2}+(1-b)\cdot\frac{-h_i(x_t(i))}{2}$$

Therefore, the expectation of the oracle on a function $g_\alpha\in\mathcal{G}(\delta,h)$ is defined as follows.

\begin{align*}
    \mathbb{E}&\left[\phi(x_t,g_\alpha)\right]\\
    &= \frac{1}{d}\sum_{i=1}^d(1/2+\alpha_i\delta)\frac{h_i(x_t(i))}{2}+(1/2-\alpha_i\delta)\frac{-h_i(x_t(i))}{2}\\
    &= \frac{\delta}{d}\sum_{i=1}^d\alpha_i h_i(x_t(i)) = g_\alpha(x_t)
\end{align*}

Thus we can conclude that the oracle is, in fact, unbiased. Next, we observe the uncentered second-order moment. 

\begin{align*}
    \mathbb{E}&\left[\phi(x_t,g_\alpha)^2\right]\\
    &= \frac{1}{d}\sum_{i=1}^d(1/2+\alpha_i\delta)\frac{h_i(x_t(i))^2}{4}+(1/2-\alpha_i\delta)\frac{h_i(x_t(i))^2}{4}\\
    &= \frac{1}{4d}\sum_{i=1}^d h_i(x_t(i))^2 = \frac{1}{4d}||H(x_t)||_2^2
\end{align*}

Then the formula for variance yields the following.

\begin{align*}
    \text{Var}(\phi(x_t,g_\alpha)) &= \mathbb{E}\left[\phi(x_t,g_\alpha)^2\right] - \mathbb{E}\left[\phi(x_t,g_\alpha)\right]^2\\
    &\leq \frac{1}{4d}||H(x_t)||_2^2
\end{align*}

Now let $S$ be the $l_\infty$ ball of radius $r$ centered at the origin. Then the variance of the oracle is upper bounded as $\text{Var}(\phi(x,g_\alpha))\leq\frac{1}{4d}||H(x)||_2^2\leq\frac{1}{4}\sup_{i\in\{1,\ldots,d\}}\sup_{z\in[-r,r]}h_i(z)^2$, so since $\text{Var}(\phi)\leq\sigma^2$ must hold, we have that $\sup_{i\in\{1,\ldots,d\}}\sup_{z\in[-r,r]}|h_i(z)|\leq 2\sigma$.

Using this oracle, we can now find an upper bound on its KL divergence. To define some notation, we will let $i_t$ denote the dimension the oracle chooses for $x_t$, and let $b_t$ denote the value of $b$ the oracle chooses for $x_t$. As such, the information revealed by the oracle is fully characterized by $\{(i_1,b_1),(i_2,b_2),\ldots,(i_T,b_T)\}$.

\subsection{Upper Bounding KL Divergence}

We now denote the information revealed by the oracle as $\mathcal{P}_\alpha^T$ and the distribution for a single $t$ as $\mathcal{P}_\alpha$. Note that, since $i$ is chosen uniformly at random, then $\mathcal{P}_\alpha(i,b)=\frac{1}{d}\mathcal{P}_{\alpha_i}(b)$. Then we can find an upper bound on the KL divergence between $\mathcal{P}_\alpha^T$ and $\mathcal{P}_{\alpha'}^T$ for $\alpha\ne\alpha'$ as follows.

\begin{align*}
    \text{KL}(\mathcal{P}_\alpha^T||\mathcal{P}_{\alpha'}^T) &= \sum_{t=1}^T \text{KL}(\mathcal{P}_\alpha(i_t,b_t)||\mathcal{P}_{\alpha'}(i_t,b_t))\\
    &= \sum_{t=1}^T\sum_{j=1}^d\frac{1}{d} \text{KL}(\mathcal{P}_{\alpha_j}(b_t)||\mathcal{P}_{\alpha'_j}(b_t))
\end{align*}

However, each term $\text{KL}(\mathcal{P}_{\alpha_j}(b_t)||\mathcal{P}_{\alpha'_j}(b_t))$ is at most the KL divergence between two Bernoulli distributions with parameters $1/2+\delta$ and $1/2-\delta$ respectively, which is upper bounded in the following way.

\begin{align*}
    \text{KL}&(\mathcal{P}_{\alpha_j}(b_t)||\mathcal{P}_{\alpha'_j}(b_t))\\ 
    &= \left(\frac{1}{2}+\delta\right)\log\left(\frac{1/2+\delta}{1/2-\delta}\right) + \left(\frac{1}{2}-\delta\right)\log\left(\frac{1/2-\delta}{1/2+\delta}\right)\\
    &= 2\delta\log\left(1+\frac{4\delta}{1-2\delta}\right) \leq \frac{2\delta\cdot 4\delta}{1-2\delta} = \frac{8\delta^2}{1-2\delta}
\end{align*}

Finally, we have $\frac{8\delta^2}{1-2\delta}\leq 16\delta^2$ when $0<\delta\leq 1/4$. Therefore, if $0<\delta\leq1/4$, then 

\begin{equation}\label{eq:KL_bound}
\text{KL}(\mathcal{P}_\alpha^T||\mathcal{P}_\beta^T)\leq 16T\delta^2
\end{equation}

\subsection{Lower Bounding Probability of Estimator being Wrong}

Suppose that a vector $\alpha^*$ is chosen uniformly at random from $\mathcal{V}$. Then let $\mathcal{M}$ be any model in $\mathbb{M}_T$, so $\mathcal{M}$ makes $T$ queries to the oracle $\phi$. Then we show that, if $\delta\leq\frac{1}{4}$, we can apply Fano's inequality \cite{Yu97}.

\begin{lemma}\label{lemma:alpha_estimator_lower_bound}
  Any model $\mathcal{M}$ that constructs any estimator $\hat{\alpha}(\mathcal{M})$ to estimate the true vertex $\alpha\in\mathcal{V}$ from T queries attains an error which is lower bounded as
  
  $$\max_{\alpha^*\in\mathcal{V}}\textbf{P}_\phi\left[\hat{\alpha}(\mathcal{M})\ne\alpha^*\right]\geq \left\{1-\frac{16T\delta^2+\log 2}{\frac{d}{2}\log(2/\sqrt{e})}\right\}$$
\end{lemma}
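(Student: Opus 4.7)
The plan is to apply Fano's inequality to the parametric family $\{\mathcal{P}_\alpha^T\}_{\alpha\in\mathcal{V}}$, combining the pairwise KL bound \eqref{eq:KL_bound} with the cardinality bound \eqref{eq:linear_space_cardinality}. First, I would recast the problem as Bayesian hypothesis testing: place a uniform prior on $\alpha^*\in\mathcal{V}$ and let $X=\{(i_t,b_t)\}_{t=1}^T$ be the full transcript produced by the oracle, so that any estimator $\hat{\alpha}(\mathcal{M})$ from a model $\mathcal{M}\in\mathbb{M}_T$ is a measurable function of $X$. Because the worst-case error $\max_{\alpha^*}\textbf{P}_\phi[\hat{\alpha}(\mathcal{M})\ne\alpha^*]$ is at least the averaged error under this uniform prior, it suffices to lower bound the latter.

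Next, I would invoke Fano's inequality in its classical form
$$\textbf{P}[\hat{\alpha}(\mathcal{M})\ne\alpha^*]\geq 1-\frac{I(\alpha^*;X)+\log 2}{\log|\mathcal{V}|}$$
and control the mutual information by writing $I(\alpha^*;X)=\frac{1}{|\mathcal{V}|}\sum_\alpha \mathrm{KL}(\mathcal{P}_\alpha^T\,\|\,\bar{\mathcal{P}}^T)$, where $\bar{\mathcal{P}}^T$ is the uniform mixture over $\mathcal{V}$. Applying convexity of $\mathrm{KL}(\,\cdot\,\|\,\cdot\,)$ in its second argument yields $I(\alpha^*;X)\leq\max_{\alpha,\beta\in\mathcal{V}}\mathrm{KL}(\mathcal{P}_\alpha^T\,\|\,\mathcal{P}_\beta^T)$, which is in turn at most $16T\delta^2$ by \eqref{eq:KL_bound} (valid under the standing assumption $0<\delta\leq 1/4$). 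Substituting the cardinality lower bound $\log|\mathcal{V}|\geq\frac{d}{2}\log(2/\sqrt e)$ from \eqref{eq:linear_space_cardinality} then delivers exactly the stated inequality.

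The only delicate point is that a model $\mathcal{M}\in\mathbb{M}_T$ may be adaptive, choosing $x_t$ based on the previous responses $(i_1,b_1),\ldots,(i_{t-1},b_{t-1})$. This is handled by the chain rule for KL divergence: the joint KL between $\mathcal{P}_\alpha^T$ and $\mathcal{P}_\beta^T$ decomposes into a sum over $t$ of conditional single-round Bernoulli KL divergences, each upper bounded as in the derivation of \eqref{eq:KL_bound} uniformly over the choice of $x_t$. With that observation in hand, the rest of the argument is a mechanical substitution, and I do not anticipate any serious technical obstacle beyond the convexity and chain-rule bookkeeping; the real work of proving Theorem \ref{theorem:general_lower_bound} will come later when this lower bound on the Bayes error is combined with Lemma \ref{lemma:alpha_estimator_upper_bound} and $\delta$ is tuned to maximize the resulting minimax rate.
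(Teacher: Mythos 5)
Your proposal is correct and follows essentially the same route as the paper: both apply Fano's inequality with the mutual information controlled by the maximum pairwise divergence $\max_{\alpha,\beta}\mathrm{KL}(\mathcal{P}_\alpha^T\,\|\,\mathcal{P}_\beta^T)\leq 16T\delta^2$ from \eqref{eq:KL_bound} and then substitute $\log|\mathcal{V}|\geq\frac{d}{2}\log(2/\sqrt{e})$ from \eqref{eq:linear_space_cardinality}. The only difference is that you make explicit the convexity argument bounding $I(\alpha^*;X)$ and the chain-rule handling of adaptive queries, steps the paper leaves implicit in its citation of Fano's inequality.
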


For a proof of this lemma, see section \ref{sec:lower-lemmas} in the appendix.

\subsection{Concluding the Proof of Theorem 1}

We now set $h_i(z)=z+r$. Then for some $\delta>0$, we analyze the set $\mathcal{G}(\delta,h)$. Since we require the region $R$ contain an $l_\infty$ ball of radius $r$, we can make sure our conditions hold by observing that 

\begin{align*}
\inf_R\inf_i\left|\int_{x\in R} h_i(x(i))\right| &= \left|(2r)^{d-1}\int_{-r}^{r}(z+r) dz\right|\\
&= \left|(2r)^{d-1}\left(\frac{z^2}{2}+rz\right)|_{-r}^{r}\right|\\ 
&= 2^dr^{d+1} > 0
\end{align*}

Thus, the conditions for Lemma \ref{lemma:unique_disc_bound} and Lemma \ref{lemma:alpha_estimator_upper_bound} hold. Now, let $k=\inf_R\inf_i\left|\int_{x\in R} h_i(x(i))\right|=2^dr^{d+1}$ and $\epsilon=k\delta/18$. Then if a model $\mathcal{M}$ achieves 

$$\mathbb{E}_\phi\left[\epsilon(\mathcal{M},\mathcal{G}(\delta,h),\phi)\right]\leq\frac{k\delta}{18}=\epsilon$$

we can use Lemma \ref{lemma:alpha_estimator_upper_bound} to get $\max_{\alpha^*\in\mathcal{V}}\textbf{P}\left[\hat{\alpha}(\mathcal{M})\ne\alpha^*\right]\leq\frac{1}{3}$. Likewise, from Lemma \ref{lemma:alpha_estimator_lower_bound}, we have that

$$\max_{\alpha^*\in\mathcal{V}}\textbf{P}_\phi\left[\hat{\alpha}(\mathcal{M})\ne\alpha^*\right]\geq \left\{1-\frac{16T(18\epsilon/k)^2+\log 2}{\frac{d}{2}\log(2/\sqrt{e})}\right\}$$

Therefore, we can combine the two terms to get

\begin{equation*}\begin{split}
  &\frac{1}{3}\geq\left\{1-\frac{16T(18\epsilon/k)^2+\log 2}{\frac{d}{2}\log(2/\sqrt{e})}\right\}\\
  \Rightarrow& \epsilon\geq k\sqrt{\frac{d\log(2/\sqrt{e})-3\log 2}{324\cdot 3\cdot 16T}} = 2^d r^{d+1}\sqrt{\frac{c_1 d-c_2}{T}}\\
  \Rightarrow& T=\Omega\left(\frac{d4^d r^{2d+2}}{\epsilon^2}\right)\text{ and }\epsilon\geq c 2^d r^{d+1}\sqrt{\frac{d}{T}}
\end{split}\end{equation*}

Therefore, we have proven Theorem \ref{theorem:general_lower_bound} since we conclude that $\epsilon^*(\mathcal{F},\phi)\geq c2^dr^{d+1}\sqrt{d/T}$.

\section{Concluding Remarks}

There are two primary ways to extend these results. First, other function classes can be considered which may yield better lower bounds than the linear function class used above. Additionally, other integration methods aside from Gaussian Quadrature and Simpson's Rule can be analyzed to determine their sample complexity upper bounds. Taking such steps could lead to finding methods with tight convergence rates for functions where Gaussian Quadrature cannot perform optimally, such as polynomials of degree four or greater.

\bibliographystyle{plain}
\bibliography{references}

\clearpage
\appendix
\onecolumn
\section{Proof of Lemmas Used in Theorem 1: Information-Theoretic Lower Bound}
\label{sec:lower-lemmas} 

In this section, we provide proofs for the lemmas used in Theorem \ref{theorem:general_lower_bound}. These lemmas consist of the uniqueness of integral values of functions in the linear function class, as well as the upper and lower bounds of estimating the vector from the integral value.

\subsection{Proof of Lemma \ref{lemma:unique_disc_bound}} 

\begin{proof}
First, we prove Lemma \ref{lemma:unique_disc_bound}, which states that there can exist at most one function from the linear function class which lies within a distance about any real value $I$ equal to 1/3 of the discrepancy. 

Assume there exists some $I\in\mathbb{R}$, a valid region $R$, and $\alpha,\beta\in\mathcal{V}$ where $\alpha\ne\beta$ such that the following inequality holds for both $\alpha$ and $\beta$.
$$\left|I-\int_{x\in R} g_\alpha(x)\right|\leq\frac{1}{3}\psi(\mathcal{G}(\delta,h))$$

Then
  \begin{equation*}\begin{split}
    \psi(\mathcal{G}(\delta,h)) &= \inf_{a,b\in\mathcal{V},a\ne b}\inf_{R'}\left|\int_{x\in R'} g_a(x)-\int_{x\in R'} g_b(x)\right|\\
    &\leq \left|\int_{x\in R} g_\alpha(x)-\int_{x\in R} g_\beta(x)\right|\\
    &= \left|\int_{x\in R} g_\alpha(x)-I+I-\int_{x\in R} g_\beta(x)\right|\\
    &\leq \left|I-\int_{x\in R} g_\alpha(x)\right|+\left|I-\int_{x\in R} g_\beta(x)\right|\\
    &\leq \frac{1}{3}\psi(\mathcal{G}(\delta,h))+\frac{1}{3}\psi(\mathcal{G}(\delta,h))\\
    &= \frac{2}{3}\psi(\mathcal{G}(\delta,h))
  \end{split}\end{equation*}
  
  However, $\psi(\mathcal{G}(\delta,h))\leq\frac{2}{3}\psi(\mathcal{G}(\delta,h))$ only holds if $\psi(\mathcal{G}(\delta,h))\leq 0$, and by equations \eqref{eq:disc_bound} and \eqref{eq:inf_lower_bound}, we have that $\psi(\mathcal{G}(\delta,h))>0$. Therefore, we have reached a contradiction, which implies that only one such $\alpha$ can exist.
\end{proof}
  
\subsection{Proof of Lemma \ref{lemma:alpha_estimator_upper_bound}} 

\begin{proof}
Now, we prove Lemma \ref{lemma:alpha_estimator_upper_bound}, which gives an upper bound on the probability that a model incorrectly predicts the true vector $\alpha^*$ from the vector set $\mathcal{V}$.

From Lemma \ref{lemma:unique_disc_bound}, at most one $\alpha$ can exist which satisfies inequality \eqref{eq:unique_disc_bound}, in which case the model chooses that $\alpha$. This implies that the model can be incorrect when the output $I$ from the model does not satisfy inequality \eqref{eq:unique_disc_bound} for that $\alpha$. Therefore, we get the following bound on the probability of the estimator being wrong.

$$\textbf{P}_\phi\left[\hat{\alpha}(\mathcal{M})\ne\alpha\right]\leq\textbf{P}_\phi\left[\epsilon(\mathcal{M},\mathcal{G}(\delta,h),\phi)\geq\frac{1}{3}\psi(\mathcal{G}(\delta,h))\right]$$

By applying Markov's inequality and using the bound in inequality \eqref{eq:model_error_bound}, we get

\begin{align*}
\textbf{P}_\phi\left[\epsilon(\mathcal{M},\mathcal{G}(\delta,h),\phi)\geq\frac{1}{3}\psi(\mathcal{G}(\delta,h))\right] &\leq \frac{\mathbb{E}\left[\epsilon(\mathcal{M},\mathcal{G}(\delta,h),\phi)\right]}{\frac{1}{3}\psi(\mathcal{G}(\delta,h))}\\
&\leq \frac{\frac{1}{9}\psi(\mathcal{G}(\delta,h))}{\frac{1}{3}\psi(\mathcal{G}(\delta,h))}\\
&= \frac{1}{3}
\end{align*}

So, since this holds for arbitrary $\alpha^*$, we can take the maximum over the $\alpha^*\in\mathcal{V}$ to prove Lemma \ref{lemma:alpha_estimator_upper_bound}.
\end{proof}

\subsection{Proof of Lemma \ref{lemma:alpha_estimator_lower_bound}} 

\begin{proof}
Finally, we prove Lemma \ref{lemma:alpha_estimator_lower_bound}, which then gives a lower bound on the probability of a model incorrectly predicting the true vector.

Using Fano's inequality, inequality \eqref{eq:linear_space_cardinality}, and inequality \eqref{eq:KL_bound}, we get the following bound, thus proving the lemma.
  \begin{equation*}\begin{split}
    \max_{\alpha^*\in\mathcal{V}}\textbf{P}&\left[\hat{\alpha}(\mathcal{M})\ne\alpha^*\right]\\
    &\geq \left\{1-\frac{\max_{\alpha,\beta\in\mathcal{V}}\left\{\text{KL}(\mathcal{P}_\alpha^T||\mathcal{P}_\beta^T)\right\}+\log 2}{\log\left|\mathcal{V}\right|}\right\}\\
    &\geq \left\{1-\frac{16T\delta^2+\log 2}{\log(2/\sqrt{e})^{d/2}}\right\}\\
    &= \left\{1-\frac{16T\delta^2+\log 2}{\frac{d}{2}\log(2/\sqrt{e})}\right\}\\
  \end{split}\end{equation*}
\end{proof}
\section{Proof of Theorem \ref{theorem:gq_upper_bound}: Sample Complexity Upper Bound of Gaussian Quadrature Method}
\label{sec:gq} 

In this section, we provide a proof for Theorem \ref{theorem:gq_upper_bound}, the sample complexity upper bound for the Gaussian Quadrature integral estimation method. Recall that, with this method, we integrate over a region $R=[-r,r]^d$, and we let $V_d=(\pm\frac{r}{\sqrt{3}}, \pm\frac{r}{\sqrt{3}},\ldots,\pm\frac{r}{\sqrt{3}})$ be the set of points at which this method will query the oracle function. Clearly, $\left|V_d\right|=2^d$.

Then let $T=m2^d$ be the total number of times the Gaussian Quadrature method will query the oracle function. More specifically, the method will query the oracle function $m$ times for each point $v\in V_d$ and will take the average of these values. Since the oracle function gives a noisy, but unbiased, estimate of the function, querying an individual point multiple times will reduce the variance the oracle function imposes on the integral estimation. 

Now, when computing the estimation of the integral, let $\hat{f}(x)=\frac{1}{m}\sum_{i=1}^m\phi(x,f)$. Then the Gaussian Quadrature method uses the following formula to estimate the value of the integral.

$$\int_{x\in R} f(x)=\int_{-r}^r\int_{-r}^r\ldots\int_{-r}^r f(x_1,x_2,\ldots,x_d)dx_1 dx_2\ldots dx_d\approx r^d\sum_{v\in V_d} \hat{f}(v)$$

\subsection{Proof of Lemma \ref{lemma:gq_exact}}

\begin{proof}
We first prove Lemma \ref{lemma:gq_exact} by showing that this multi-dimensional extension of Gaussian Quadrature is exact for polynomials of degree up to 3. It is already known that Gaussian Quadrature is exact in this manner for one dimension. Therefore, we can use induction to extend its exactness to higher dimensions. This will allow us to determine an error term on the integral estimation for functions that cannot be exactly estimated by this method. 

It is known that Gaussian Quadrature is exact in the one dimensional case. Therefore, for $d=1$, we have
\begin{align*}
\int_{-r}^r f(x)dx &= r\left(f\left(\frac{r}{\sqrt{3}}\right)+f\left(-\frac{r}{\sqrt{3}}\right)\right)\\
&= r\sum_{v\in V_1}f(v)
\end{align*}
where $V_1=(\pm\frac{r}{\sqrt{3}})$ as defined at the start of the section.

We now treat this as the base case for induction. Then assume that the claim holds for $d=n-1$. 

$$\int_{-r}^r\int_{-r}^r\ldots\int_{-r}^r f(x_1,\ldots,x_{n-1},x_n) dx_1\ldots dx_{n-1} = r^{n-1}\sum_{v\in V_{n-1}} f(v,x_n)$$

Note that, in this expression, $x_n$ is being held constant, and $V_{n-1}=(\pm\frac{r}{\sqrt{3}},\ldots,\pm\frac{r}{\sqrt{3}})$ with $\left|V\right|=2^{n-1}$. Now, for $d=n$, we can replace the innermost $n-1$ integrals using the above formula.

$$\int_{-r}^r\ldots\int_{-r}^r f(x_1,\ldots,x_n)dx_1\ldots dx_n = \int_{-r}^r r^{n-1}\sum_{v\in V_{n-1}}f(v,x_n)dx_n$$

However, this is now a one-dimensional integral in terms of only $x_n$. Therefore, we can now apply Gaussian Quadrature again to get the following formula.

\begin{align*}
  \int_{-r}^r r^{n-1}\sum_{v\in V_{n-1}}f(v,x_n)dx_n &= r\left(r^{n-1}\sum_{v\in V_{n-1}}f\left(v,\frac{r}{\sqrt{3}}\right)+r^{n-1}\sum_{v\in V_{n-1}}f\left(v,-\frac{r}{\sqrt{3}}\right)\right)\\
  &= r^n \sum_{v\in V_n} f(v)
\end{align*}

Therefore, by induction, we have that, for any $d$, Gaussian Quadrature is exact for polynomials of degree at most 3, so we get the following formula, which proves Lemma \ref{lemma:gq_exact}. 

$$\int_{x\in R} f(x)=\int_{-r}^r\int_{-r}^r\ldots\int_{-r}^r f(x_1,x_2,\ldots,x_d)dx_1 dx_2\ldots dx_d = r^d\sum_{v\in V_d} f(v)$$
\end{proof}

\subsection{Proof of Lemma \ref{lemma:gq_error_term}}

\begin{proof}
Next, because Gaussian Quadrature is exact for polynomials of degree up to 3, we can now use the error formula for Hermite Interpolation to find a formula for the error of the integral estimation given by Gaussian Quadrature without the noisy oracle function, thus proving Lemma \ref{lemma:gq_error_term}. This formula will then be able to be used to find an upper bound on the information-theoretic error of the Gaussian Quadrature method when it only has access to noisy function values.

  By Lemma \ref{lemma:gq_exact}, Gaussian Quadrature is exact for polynomials of degree up to 3. Now recall that we let $V_d=(\pm\frac{r}{\sqrt{3}}, \pm\frac{r}{\sqrt{3}},\ldots,\pm\frac{r}{\sqrt{3}})$ be the set of points at which this method will query the oracle function. Then for any estimation $r^d\sum_{v\in V_d}f(v)$ given by the Gaussian Quadrature method, we have that $\int_{x\in R} p_3(x)=r^d\sum_{v\in V_d}f(v)$ for any polynomial $p_3$ of degree at most 3, such that $p_3(v)=f(v)$ for all $v\in V_d$. 
  
  Then we can represent the error of the Gaussian Quadrature estimation in terms of $p_3$ using the following equation.
  
  $$\int_{x\in R} f(x)-r^d\sum_{v\in V_d}f(v)=\int_{x\in R} f(x)-\int_{x\in R} p_3(x)=\int_{x\in R}\left(f(x)-p_3(x)\right)$$
  
  Next, let $i\in\{1,\ldots,d\}$. Then by holding the other $d-1$ dimensions constant, Hermite Interpolation gives the following error formula which holds for any $x_i\in[-r,r]$
  
  $$f_i(x_i)-p_{3,i}(x_i)=\frac{f_i^{(4)}(\xi(x))c}{4!}$$
  
  where $c$ is defined using the following formula.
  
  $$c=\int_{-r}^{r}\left((x-\frac{r}{\sqrt{3}})(x+\frac{r}{\sqrt{3}})\right)^2dx=\frac{8r^5}{45}$$
  
  Therefore, $c\in\left[0,\frac{8r^5}{45}\right]$
  
  However, since the other $d-1$ dimensions are being held constant, the above expression holds for any $x_j\in\mathbb{R}$ where $i\ne j$ as long as $x_i\in[-r,r]$ since the error formula for Hermite Interpolation only depends on $x_i$. Note that, by changing the constant values these other dimensions are being held to, the above expression can change since $f_i$ and $p_{3,i}$ are defined in terms of the constant values $x_j$ for $j\ne i$.
  
  Additionally, at any point $x=(x_1,\ldots,x_i,\ldots,x_d)\in[-r,r]^d$, we have that $f(x)=f_i(x_i)$ by simply holding the $d-1$ other dimensions constant to their values at this point. Now we let $p_3$ be defined by the polynomials constructed using Hermite Interpolation. Then, $p_3(x)=p_{3,i}(x)$, so since Gaussian Quadrature is exact for any polynomial of degree up to 3, we get the following formula for the absolute value of the error at any $x\in[-r,r]^d$.
  
  $$\left|f(x)-p_3(x)\right|=\left|f_i(x_i)-p_{3,i}(x_i)\right|=\left|\frac{f_i^{(4)}(\xi(x))c}{4!}\right|$$
  
  Therefore, by taking the supremum of this formula over all dimensions $i$ and over all points $x^*\in[-r,r]^d$, we can upper bound the absolute value of the error. Note that $c\in[0,\frac{8r^5}{45}]$, so since $c\geq 0$, we can factor it out of the absolute value.
  
  $$\left|f(x)-p_3(x)\right|\leq\sup_{i,x^*}\left|\frac{f_i^{(4)}(\xi(x^*))*c}{4!}\right|= \frac{c}{4!}\sup_{i,x^*}\left|f_i^{(4)}(\xi(x^*))\right|$$
  
  Now, since the absolute value of an integral of a function is less than or equal to the integral of the absolute value of the function, we can upper bound the error of the Gaussian Quadrature estimate to prove the theorem. Note that $x^*$ depends on the supremum, not the integral, so the supremum can be factored out of the integral. Additionally, the volume of the region of integration $[-r,r]^d$ is $2^d r^d$. Thus, we prove Lemma \ref{lemma:gq_error_term}.
  
  \begin{align*}
    \left|\int_{x\in R} f(x)-r^d\sum_{v\in V_d}f(v)\right| &= \left|\int_{x\in R}\left(f(x)-p_3(x)\right)\right|\\
    &\leq \int_{x\in R}\left|f(x)-p_3(x)\right|\\
    &\leq \int_{x\in R} \frac{c}{4!}\sup_{i,x^*}\left|f_i^{(4)}(\xi(x^*))\right|\\
    &= \frac{c}{4!}\sup_{i,x^*}\left|f_i^{(4)}(\xi(x^*))\right| \int_{x\in R} 1\\
    &= \frac{c2^d r^d}{4!}\sup_{i,x^*}\left|f_i^{(4)}(\xi(x^*))\right|
  \end{align*}
\end{proof}  

\subsection{Proof of Theorem \ref{theorem:gq_upper_bound}}

\begin{proof}
We can now use the error bound from Lemma \ref{lemma:gq_error_term} to find a sample complexity upper bound on the error of the estimate produced by the Gaussian Quadrature method. 

  Let $S=\int_{x\in R} f(x)-r^d\sum_{v\in V_d}f(v)$ be the error for a non-noisy estimation produced by the Gaussian Quadrature method. Then since $\mathbf{E}[\phi(x,f)]=f(x)$ because the oracle function is unbiased, we can use the linearity of expectation to get
  
  $$\mathbf{E}\left[\hat{f}(x)\right]=\mathbf{E}\left[\frac{1}{m}\sum_{i=1}^m\phi(x,f)\right]=\frac{1}{m}\sum_{i=1}^m\mathbf{E}[\phi(x,f)]=\mathbf{E}[\phi(x,f)]=f(x)$$
  
  Therefore, we can conclude that $\hat{f}(x)$, the average of the $m$ calls to the noisy oracle function, is also unbiased. Then we can use this fact to get the following expression.
  
  $$\mathbf{E}_\phi\left[r^d\sum_{v\in V_d}\hat{f}(v)+S\right]=\mathbf{E}_\phi\left[\int_{x\in R} f(x)\right]+\mathbf{E}_\phi\left[r^d\sum_{v\in V_d}\hat{f}(v)-r^d\sum_{v\in V_d}f(v)\right]=\int_{x\in R} f(x)+0=\int_{x\in R} f(x)$$
  
  Additionally, since each call to the oracle function is independent, we can consider the variance of $\hat{f}(x)$.
  
  \begin{align*}
    \text{Var}\left(\hat{f}(x)\right) &= \text{Var}\left(\frac{1}{m}\sum_{i=1}^m\phi(x,f)\right)\\
    &= \frac{1}{m^2}\text{Var}\left(\sum_{i=1}^m\phi(x,f)\right)\\
    &= \frac{1}{m^2}\sum_{i=1}^m\text{Var}\left(\phi(x,f)\right)\\
    &\leq \frac{1}{m^2}\sum_{i=1}^m \sigma^2\\
    &= \frac{m\sigma^2}{m^2}\\
    &= \frac{\sigma^2}{m}
  \end{align*}
  
  Therefore, we can find the variance of $r^d\sum_{v\in V_d}\hat{f}(v)+S$. 
  
  \begin{align*}
    \text{Var}\left(r^d\sum_{v\in V_d}\hat{f}(v)+S\right) &= r^{2d}\sum_{v\in V_d}\text{Var}\left(\hat{f}(v)\right)\\
    &\leq r^{2d}\sum_{v\in V_d}\frac{\sigma^2}{m}\\
    &= \frac{2^d r^{2d}\sigma^2}{m}
  \end{align*}
  
  Next, since $\mathbf{E}_\phi\left[r^d\sum_{v\in V_d}\hat{f}(v)+S\right]=\int_{x\in R} f(x)$, Chebyshev's Inequality gives the following bound.
  
  $$\mathbf{P}\left(\left|r^d\sum_{v\in V_d}\hat{f}(v)+S-\int_{x\in R} f(x)\right|>\epsilon\right)\leq \min\left(\frac{2^d r^{2d}\sigma^2}{m\epsilon^2},1\right)$$
  
  Now, let $X=\left|r^d\sum_{v\in V_d}\hat{f}(v)+S-\int_{x\in R} f(x)\right|$. Then the Layer Cake representation gives the following expression for $\mathbf{E}[X]$.
  
  \begin{align*}
    \mathbf{E}\left[X\right] &= \int_0^\infty \mathbf{P}\left(X>\alpha\right)d\alpha\\
    &\leq \int_0^\infty \min\left(\frac{2^d r^{2d}\sigma^2}{m\alpha^2},1\right)\\
    &\leq \int_0^{\sqrt{2^d r^{2d}\sigma^2/m}}1d\alpha+\frac{2^d r^{2d}\sigma^2}{m}\int_{\sqrt{2^d r^{2d}\sigma^2/m}}^\infty \frac{1}{\alpha^2} d\alpha\\
    &= \sqrt{\frac{2^d r^{2d}\sigma^2}{m}}+\frac{2^d r^{2d}\sigma^2}{m}\sqrt{\frac{m}{2^d r^{2d}\sigma^2}}\\
    &= 2\sqrt{\frac{2^d r^{2d}\sigma^2}{m}}\\
    &= \frac{2^{d/2+1}r^d\sigma}{\sqrt{m}}
  \end{align*}
  
  Next, since $\mathbf{E}_\phi\left[\left|r^d\sum_{v\in V_d}\hat{f}(v)+S-\int_{x\in R} f(x)\right|\right]=\mathbf{E}[X]\leq \frac{2^{d/2+1}r^d\sigma}{\sqrt{m}}$, we can get the following upper bound.
  
  $$\mathbf{E}_\phi\left[\left|r^d\sum_{v\in V_d}\hat{f}(v)-\int_{x\in R} f(x)\right|\right]\leq \frac{2^{d/2+1}r^d\sigma}{\sqrt{m}}-S$$
  
  However, Lemma \ref{lemma:gq_error_term} tells us that $\left|S\right|\leq \frac{c2^d r^d}{4!}\sup_{i,x^*}\left|f_i^{(4)}(\xi(x^*))\right|$. Therefore, we can substitute this formula into the above inequality to get a new upper bound.
  
  $$\mathbf{E}_\phi\left[\left|r^d\sum_{v\in V_d}\hat{f}(v)-\int_{x\in R} f(x)\right|\right]\leq \frac{2^{d/2+1}r^d\sigma}{\sqrt{m}}+\frac{c2^d r^d}{4!}\sup_{i,x^*}\left|f_i^{(4)}(\xi(x^*))\right|$$
  
  Then since $\left|f_i^{(4)}(x)\right|\leq K$ for all $x\in[-r,r]^d$, and since $c\in[0,8r^5/45]$, we get the following inequality.
  
  $$\mathbf{E}_\phi\left[\left|r^d\sum_{v\in V_d}\hat{f}(v)-\int_{x\in R} f(x)\right|\right]\leq \frac{2^{d/2+1}r^d\sigma}{\sqrt{m}}+\frac{2^{d+1} r^{d+5}}{6\cdot 45}K$$
  
  However, since $T=m2^d$, we can then substitute $m=T/2^d$ into the above inequality to prove the first part of Theorem \ref{theorem:gq_upper_bound}.
  
  $$\epsilon(GQ,F,\phi)\leq \frac{2^{d+1}r^d\sigma}{\sqrt{T}}+\frac{2^{d+1} r^{d+5}}{6\cdot 45}K$$
  
  Finally, it is trivial to see that, if $K=0$, then the other bound in Theorem \ref{theorem:gq_upper_bound} holds.
  
  $$\epsilon(GQ,F,\phi)\leq \frac{2^{d+1}r^d\sigma}{\sqrt{T}}$$
\end{proof}
\section{Proof of Theorem \ref{theorem:gq_gaussian_oracle}: Gaussian Quadrature Error with Gaussian Oracle}
\label{sec:gaussianoracle} 

\begin{proof}
We now assume that the noisy oracle function follows a Gaussian distribution. Then instead of variance being upper bounded by $\sigma^2$, we have that $\text{Var}(\phi)=\sigma^2$. This removes the need for Chebyshev's inequality in the proof for the information theoretic upper bound, instead allowing us to find a tight formula. Note that the work for Lemmas \ref{lemma:gq_exact} and \ref{lemma:gq_error_term} still hold in this case.

  Let $S=\sum_{v\in V}f(v)-\int_{x\in R} f(x)$ be the error for a non-noisy estimation produced by the Gaussian Quadrature method. Then since $\mathbf{E}[\phi(x,f)]=f(x)$ because the Gaussian oracle function is still unbiased, we again get
  
  $$\mathbf{E}\left[\hat{f}(x)\right]=\mathbf{E}\left[\frac{1}{m}\sum_{i=1}^m\phi(x,f)\right]=\frac{1}{m}\sum_{i=1}^m\mathbf{E}[\phi(x,f)]=\mathbf{E}[\phi(x,f)]=f(x)$$
  
  Additionally, since each call to the oracle function relies on an independent Gaussian random variable with variance $\sigma^2$, we can consider the variance of $\hat{f}(x)$.
  
  \begin{align*}
    \text{Var}\left(\hat{f}(x)\right) &= \text{Var}\left(\frac{1}{m}\sum_{i=1}^m\phi(x,f)\right)\\
    &= \frac{1}{m^2}\sum_{i=1}^m\text{Var}\left(\phi(x,f)\right)\\
    &= \frac{1}{m^2}\sum_{i=1}^m \sigma^2\\
    &= \frac{\sigma^2}{m}
  \end{align*}
  
  In fact, since $\phi(x,f)$ is a Gaussian random variable with mean $f(x)$ and variance $\sigma^2$, we have that $\hat{f}(x)$ is a Gaussian random variable with mean $f(x)$ and variance $\sigma^2/m$. That is, we have
  
  $$\hat{f}(x)\sim \mathcal{N}(f(x),\sigma^2/m)$$
  
  Next, we can consider that $r^d\sum_{v\in V_d}\hat{f}(v)$ is a sum of these normal random variables. Therefore, since the sum of normal random variables is a normal random variable whose mean is the sum of the original means and whose variance is the sum of the original variances, we get 
  
  $$r^d\sum_{v\in V_d}\hat{f}(v)\sim\mathcal{N}\left(r^d\sum_{v\in V_d}f(v),r^{2d}\sum_{v\in V_d}\frac{\sigma^2}{m}\right)=\mathcal{N}\left(r^d\sum_{v\in V_d}f(v),\frac{2^d r^{2d}\sigma^2}{m}\right)$$
  
  Finally, since we wish to consider the expected error of the Gaussian Quadrature method, we now note that the expected error is a normal random variable whose mean is shifted by $\int_{x\in R} f(x)$.
  
  $$r^d\sum_{v\in V_d}\hat{f}(v)-\int_{x\in R} f(x)\sim\mathcal{N}\left(r^d\sum_{v\in V_d}f(v)-\int_{x\in R} f(x),\frac{2^d r^{2d}\sigma^2}{m}\right)=\mathcal{N}\left(S,\frac{2^d r^{2d}\sigma^2}{m}\right)$$
  
  Now, consider the case where $K=0$. Then by Lemma \ref{lemma:gq_exact}, the non-noisy Gaussian Quadrature estimate is exact. Therefore, the error for the non-noisy estimation, $S$, must be zero, so we apply the fact that, for a Gaussian variable $Z\sim\mathcal{N}(0,s^2)$, $\mathbf{E}[|Z|]=s\sqrt{2/\pi}$ to get the following formula.
  
  $$\mathbf{E}\left[\left|r^d\sum_{v\in V_d}\hat{f}(v)-\int_{x\in R} f(x)\right|\right]=\sqrt{\frac{2^d r^{2d}\sigma^2}{m}}\sqrt{\frac{2}{\pi}}= \frac{2^{d/2}r^d\sigma}{\sqrt{m}}\sqrt{\frac{2}{\pi}}$$
  
  Next, consider the case where $K=O(1/\sqrt{m})>0$. Then we look at the error for a class of functions for which the results for Lemma \ref{lemma:gq_error_term} hold with equality. That is, we assume the error for the non-noisy Gaussian Quadrature estimation is 
  
  $$S=\frac{c2^d r^d}{4!\sqrt{m}}$$.
  
  Then for a Gaussian variable $Z\sim\mathcal{N}(\mu,s^2)$, we know that $\mathbf{E}[|Z|]=s\exp\left(-\mu^2/(2s^2)\right)\sqrt{2/\pi}+\mu\text{ erf}\left(\mu/(\sqrt{2}s)\right)$ where erf is the Gauss error function. Thus we can represent the expectation using the following formula, and we can substitute the above formula for $S$ to get an explicit formula for the error.
  
  \begin{align*}
    \mathbf{E}_\phi\left[\left|r^d\sum_{v\in V_d}\hat{f}(v)-\int_{x\in R} f(x)\right|\right] &= \frac{2^{d/2}r^d\sigma}{\sqrt{m}}\exp\left(-\frac{S^2 m}{2^{d+1}r^{2d}\sigma^2}\right)\sqrt{\frac{2}{\pi}}+S\text{ erf}\left(\frac{S\sqrt{m}}{2^{d/2}r^d\sigma\sqrt{2}}\right)\\
    &= \frac{2^{d/2}r^d\sigma}{\sqrt{m}}\exp\left(-\frac{(c2^d r^d)^2 }{4!2^{d+1}r^{2d}\sigma^2}\right)\sqrt{\frac{2}{\pi}}+\frac{c2^d r^d}{4!\sqrt{m}}\text{ erf}\left(\frac{c2^d r^d}{4!2^{d/2}r^d\sigma\sqrt{2}}\right)\\
    &= \frac{2^{d/2}r^d\sigma}{\sqrt{m}}\exp\left(-\frac{c^2 2^d }{48\sigma^2}\right)\sqrt{\frac{2}{\pi}}+\frac{c2^d r^d}{4!\sqrt{m}}\text{ erf}\left(\frac{c2^{d/2}}{24\sigma\sqrt{2}}\right)\\
    &= \frac{2^{d/2}r^d\sigma}{\sqrt{m}}\exp\left(-\frac{c_1 2^d }{\sigma^2}\right)\sqrt{\frac{2}{\pi}}+\frac{c_2 2^d r^d}{\sqrt{m}}\text{ erf}\left(\frac{c_3 2^{d/2}}{\sigma}\right)
  \end{align*}
  
  Finally, we again note that $T=m2^d$, so we can substitute $m=T/2^d$ into the two above formulas. This yields the following two expressions, thus proving Theorem \ref{theorem:gq_gaussian_oracle}. First, for arbitrary $K$, we have the following formula from the first part of the theorem.
  
   $$\epsilon(GQ,F,\phi)= \frac{2^{d}r^d\sigma}{\sqrt{T}}\exp\left(-\frac{c_1 2^d }{\sigma^2}\right)\sqrt{\frac{2}{\pi}}+\frac{c_2 2^{3d/2} r^d}{\sqrt{T}}\text{erf}\left(\frac{c_3 2^{d/2}}{\sigma}\right)$$
  
  Then for $K=0$, we get the following formula from the second part of the theorem.
  
  $$\epsilon(GQ,F,\phi)=\frac{2^{d}r^d\sigma}{\sqrt{T}}\sqrt{\frac{2}{\pi}}$$
\end{proof}
\section{Experiments for Gaussian Quadrature with Gaussian Oracle}
\label{sec:gqexperiment}

In addition to the proof for Theorem \ref{theorem:gq_gaussian_oracle}, this result was also tested experimentally. The plots for these experiments are shown below. They compare the average error to the number of queries made, the number of dimensions for the input space, and the size of the integration region. Note that the Gaussian Quadrature method queries a total of $T=m2^d$ points, so the first plot consists of the average error of the Gaussian Quadrature method at integer multiples of $2^d$. 

For the first experiment, shown in Figure \ref{fig:error-vs-t}, 100 different cubic polynomials were randomly generated, and Gaussian Quadrature was used to estimate the integral of these polynomials over the region $[-5,5]^{10}$. The red curve demonstrates the expected error formulas as determined by Theorem \ref{theorem:gq_gaussian_oracle} while the blue curve demonstrates the experimental error averaged over these 100 polynomials. 

\begin{figure}[H]
    \begin{center}
        \includegraphics[scale = 0.7]{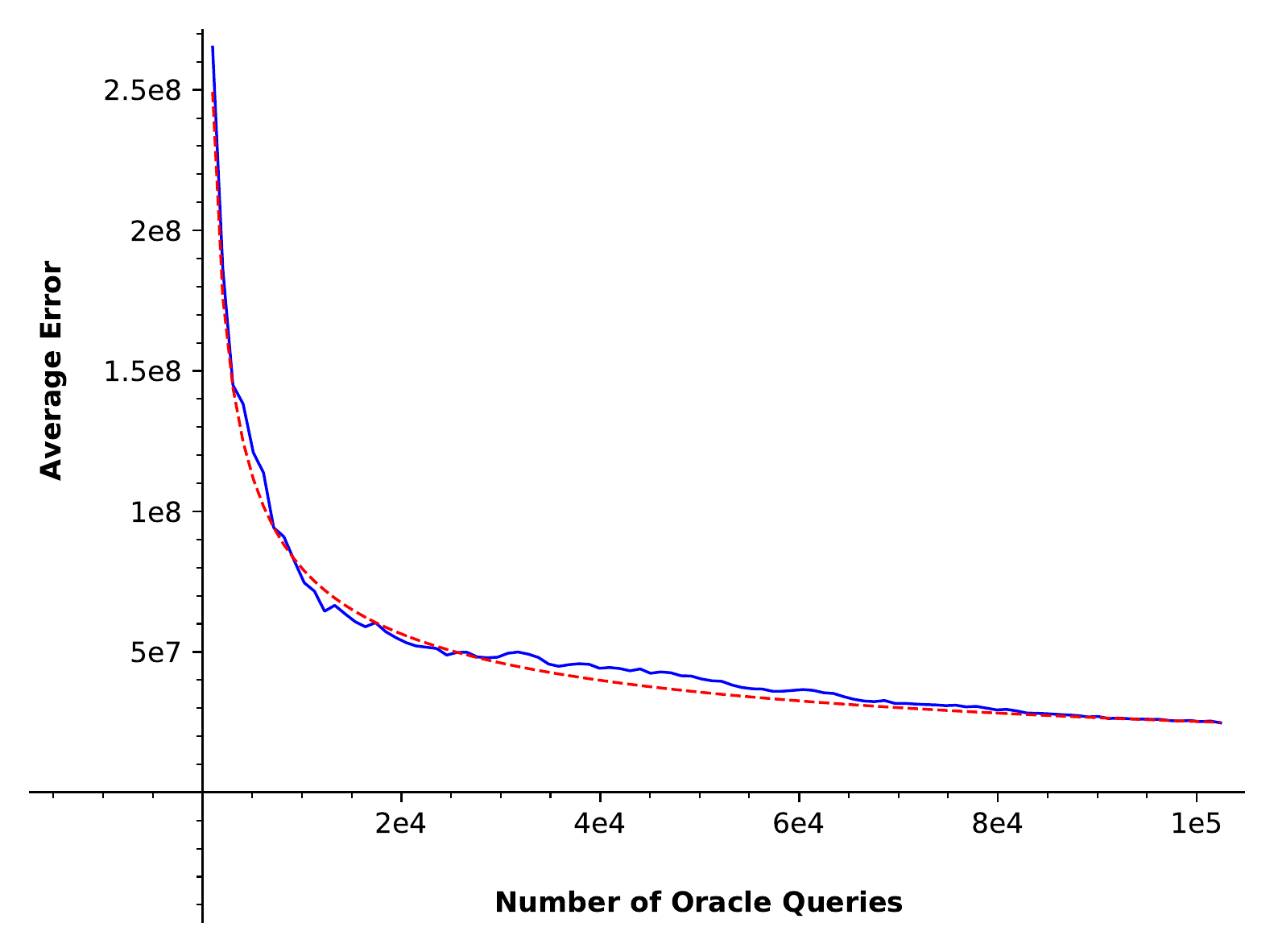}
        \caption{This figure shows the average error of the Gaussian Quadrature method versus the number of queries made to the oracle function. The blue curve demonstrates the experimental results, and the red curve plots the equation from Theorem \ref{theorem:gq_gaussian_oracle}, $\epsilon(GQ,F,\phi)=\frac{2^{d}r^d\sigma}{\sqrt{T}}\sqrt{\frac{2}{\pi}}=\frac{2^{10}5^{10}}{\sqrt{T}}\sqrt{\frac{2}{\pi}}\approx 7.9788456\cdot 10^9/\sqrt{T}$.}
        \label{fig:error-vs-t}
    \end{center}
\end{figure}

In the second experiment, shown in Figure \ref{fig:error-vs-d}, 100 cubic polynomials were generated for each value of $d$ ranging from 1 to 16. Then Gaussian Quadrature was used to estimate the integral of these polynomials over the region $[-5,5]^d$ by querying each point a total of 4 times. Thus, $T=4\cdot 2^d$ in this case. Again, the red curve demonstrates the expected error from Theorem \ref{theorem:gq_gaussian_oracle} while the blue curve demonstrates the experimental error averaged over the different polynomials. 

\begin{figure}[H]
    \begin{center}
        \includegraphics[scale = 0.7]{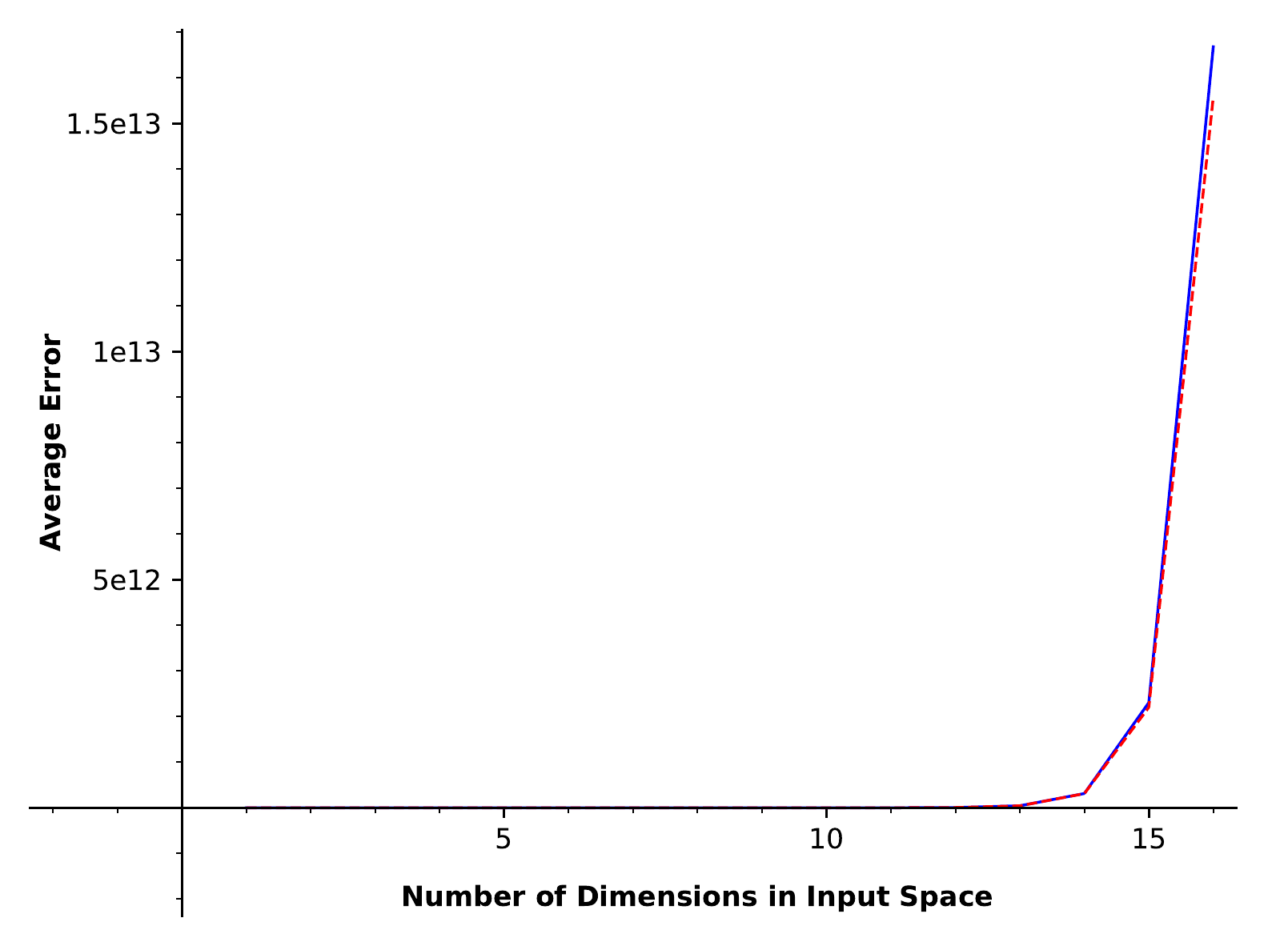}
        \caption{This figure shows the average error of the Gaussian Quadrature method versus the number of dimensions of the input space. The blue curve demonstrates the experimental results, and the red curve plots the equation from Theorem \ref{theorem:gq_gaussian_oracle}, $\epsilon(GQ,F,\phi)=\frac{2^{d}r^d\sigma}{\sqrt{T}}\sqrt{\frac{2}{\pi}}=\frac{2^{d}5^{d}}{\sqrt{4\cdot 2^d}}\sqrt{\frac{2}{\pi}}\approx 0.3989422804\cdot\sqrt{50}^d$.}
        \label{fig:error-vs-d}
    \end{center}
\end{figure}

Figure \ref{fig:error-vs-d-log} then displays the $log$ of the average error from the data shown above. Here, it can be more easily seen that the average error matches the formula from Theorem \ref{theorem:gq_gaussian_oracle} at the lower values of $d$.

\begin{figure}[H]
    \begin{center}
        \includegraphics[scale = 0.7]{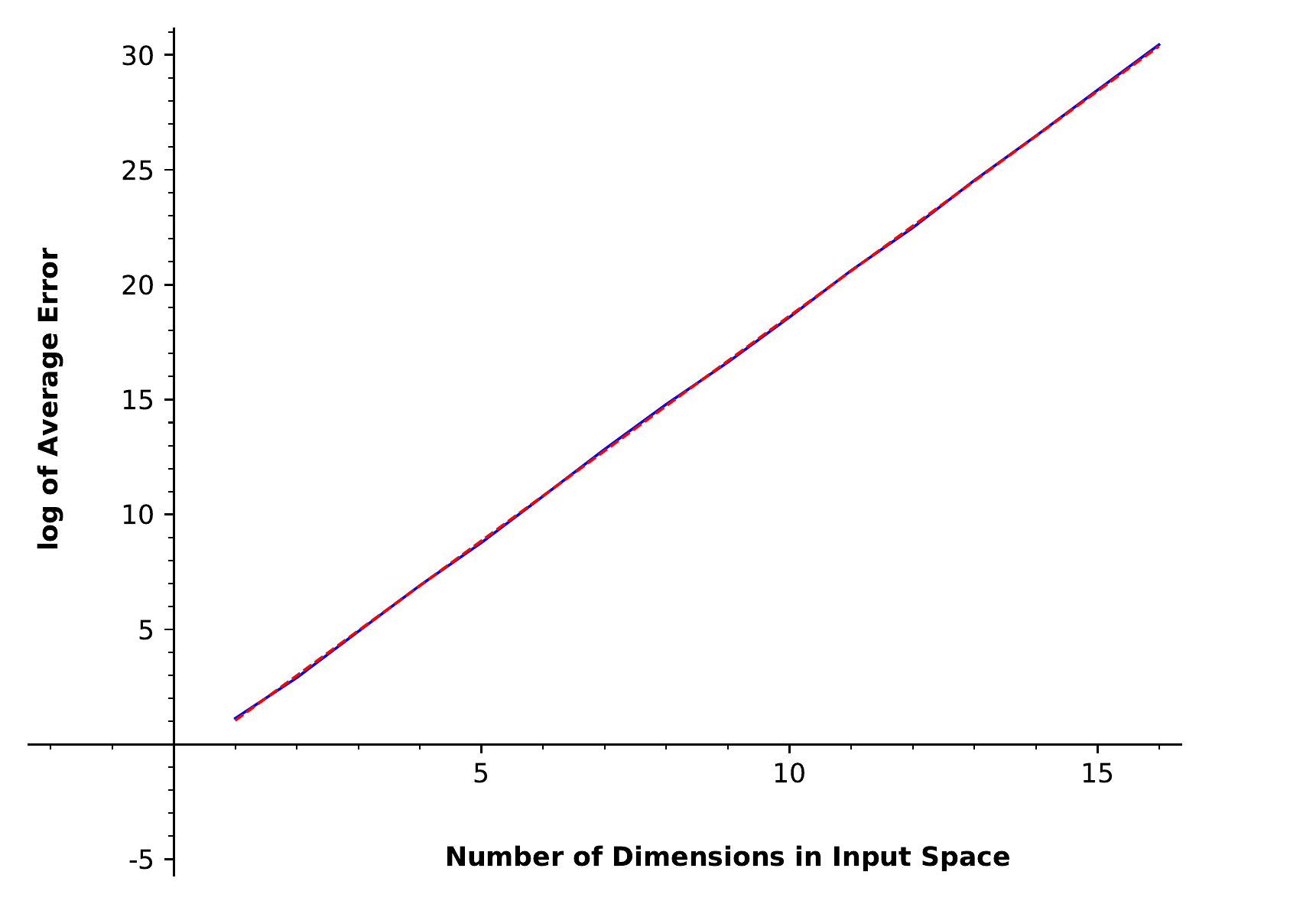}
        \caption{This figure shows the log of average error of the Gaussian Quadrature method versus the number of dimensions of the input space. The blue curve demonstrates the $\log$ of the experimental results, and the dashed red curve plots the $\log$ of the equation from Theorem \ref{theorem:gq_gaussian_oracle}.}
        \label{fig:error-vs-d-log}
    \end{center}
\end{figure}

In the third experiment, shown in Figure \ref{fig:error-vs-r}, 100 cubic polynomials were generated for each value of $r$, where $r$ consisted of integer powers of 2 ranging from 1/32 to 1024. Then Gaussian Quadrature was again used to estimate the integral of these polynomials over the region $[-r,r]^{10}$ by querying each point a total of 4 times. Thus, $T=4\cdot 2^{10}$ in this case. Again, the red curve demonstrates the expected error from Theorem \ref{theorem:gq_gaussian_oracle} while the blue curve demonstrates the experimental error averaged over the different polynomials. 

\begin{figure}[H]
    \begin{center}
        \includegraphics[scale = 0.7]{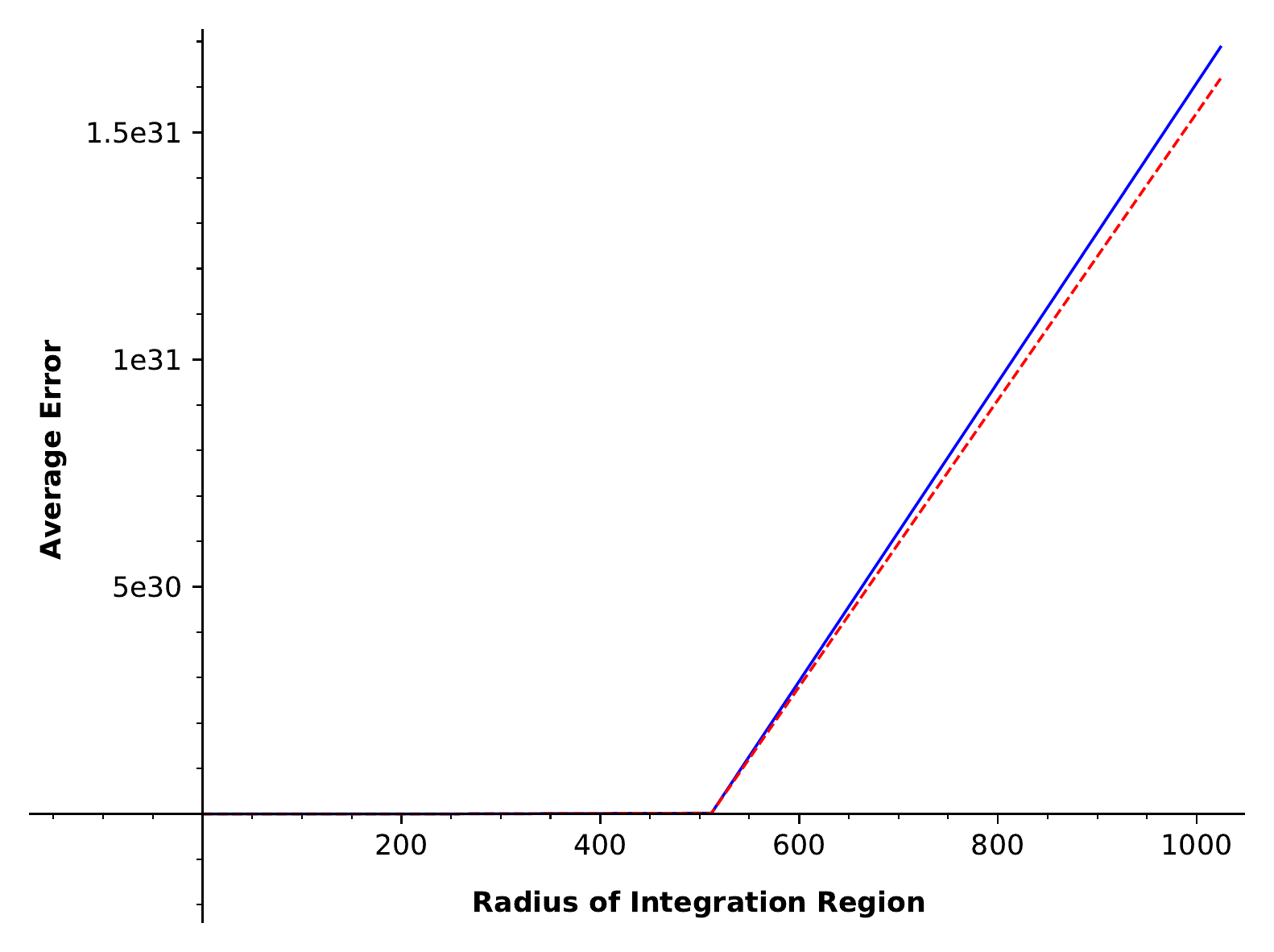}
        \caption{This figure shows the average error of the Gaussian Quadrature method versus the size of the integration region. The blue curve demonstrates the experimental results, and the red curve plots the equation from Theorem \ref{theorem:gq_gaussian_oracle}, $\epsilon(GQ,F,\phi)=\frac{2^{d}r^d\sigma}{\sqrt{T}}\sqrt{\frac{2}{\pi}}=\frac{2^{10}r^{10}}{\sqrt{4\cdot 2^{10}}}\sqrt{\frac{2}{\pi}}\approx 12.76615297\cdot r^{10}$.}
        \label{fig:error-vs-r}
    \end{center}
\end{figure}

Finally, in Figure \ref{fig:error-vs-r-log}, we display the $log_2$ of the average error with respect to the $log_2$ of $r$. That is, since $r$ consisted of powers of 2, we display the $log_2$ of the error versus the power of 2. Again, it is more clear that the average error matches the formula from Theorem \ref{theorem:gq_gaussian_oracle} at the smaller values of $r$. 

\begin{figure}[H]
    \begin{center}
        \includegraphics[scale = 0.7]{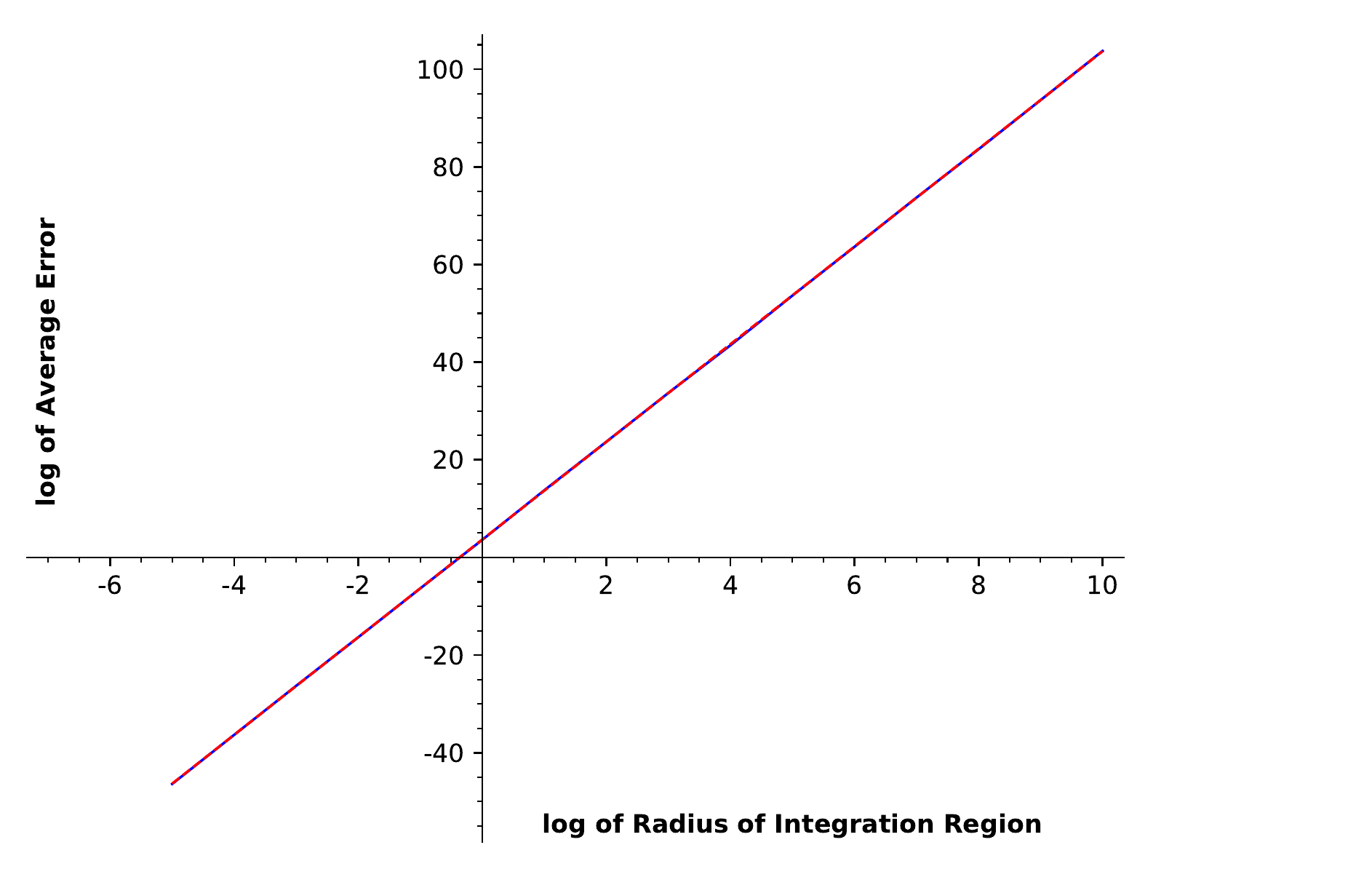}
        \caption{This figure shows the log of average error of the Gaussian Quadrature method versus the log of the size of the integration region. The blue curve demonstrates the $\log_2$ of the experimental results, and the dashed red curve plots the $\log_2$ of the equation from Theorem \ref{theorem:gq_gaussian_oracle}.}
        \label{fig:error-vs-r-log}
    \end{center}
\end{figure}
\section{Sample Complexity Upper Bound of Simpson's Rule Method}
\label{sec:simpsonsrule} 

We now consider the sample complexity upper bound for the Simpson's Rule integral estimation method. With this method, we integrate over a region $R=[a_1,b_1]\times[a_2,b_2]\times\ldots\times[a_d,b_d]$, and we let $V_d=\{(v_1,\ldots,v_d)\mid v_i\in\{a_i,\frac{a_i+b_i}{2},b_i\}\}$ be the set of points at which this method will query the oracle function. Clearly, $\left|V_d\right|=3^d$.

Then let $T=m3^d$ be the total number of times the Simpson's Rule method will query the oracle function. More specifically, the method will query the oracle function $m$ times for each point $v\in V_d$ and will take the average of these values. Since the oracle function gives a noisy, but unbiased, estimate of the function, querying an individual point multiple times will reduce the variance the oracle function imposes on the integral estimation.

Now, when computing the estimation of the integral, let $\hat{f}(x)=\frac{1}{m}\sum_{i=1}^m\phi(x,f)$. Next, we let $m_d(v)=\sum_{i=1}^d 1[v_i=\frac{a_i+b_i}{2}]$ be the count for how many components in a vector $v=(v_1,\ldots,v_d)\in V_d$ lie on the midpoint of the region in each dimension, and let $w_d(v)=4^{m_d(v)}\prod_{i=1}^d\frac{b_i-a_i}{6}$ be the weights corresponding to each point $v\in V_d$. Then the Simpson's Rule method uses the following formula to estimate the value of the integral.

$$\int_{x\in R} f(x)=\int_{a_d}^{b_d}\ldots\int_{a_2}^{b_2}\int_{a_1}^{b_1} f(x_1,x_2,\ldots,x_d)dx_1 dx_2\ldots dx_d\approx \sum_{v\in V_d} w_d(v)\hat{f}(v)=\prod_{i=1}^d\frac{b_i-a_i}{6}\sum_{v\in V_d}4^{m_d(v)}\hat{f}(v)$$

\subsection{Simpson's Rule is Exact for Degree 3 Polynomials}

We now show that this multi-dimensional extension of Simpson's Rule is exact for polynomials of degree up to 3. It is already known that Simpson's Rule is exact in this manner for one dimension. Therefore, we can use induction to extend its exactness to higher dimensions. This will allow us to determine an error term on the integral estimation for functions that cannot be exactly estimated by this method. 

\begin{lemma}\label{lemma:sr_exact}
If f is a polynomial of degree at most 3, then a non-noisy estimation from the Simpson's Rule method will exactly estimate the integral. That is,
  \begin{equation}\label{eq:sr_exact}
    \int_{x\in R} f(x)=\sum_{v\in V_d} w_d(v)f(v)
  \end{equation}
\end{lemma}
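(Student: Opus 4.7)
The plan is to mirror the inductive structure used in Lemma \ref{lemma:gq_exact}, leveraging the known one-dimensional exactness of Simpson's Rule for cubics and then lifting to arbitrary dimension by peeling off one integration variable at a time. The base case $d=1$ is the classical identity $\int_{a_1}^{b_1} f(x)\,dx = \frac{b_1-a_1}{6}\bigl(f(a_1)+4f(\tfrac{a_1+b_1}{2})+f(b_1)\bigr)$, which is exactly \eqref{eq:sr_exact} with the weights $w_1(v)$ contributing a factor of $\frac{b_1-a_1}{6}$ at the endpoints (where $m_1(v)=0$) and $\frac{4(b_1-a_1)}{6}$ at the midpoint (where $m_1(v)=1$).

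For the inductive step, I would assume the formula holds in dimension $d-1$ and write the $d$-dimensional integral using Fubini's theorem as an outer integral in $x_d$ of the $(d-1)$-dimensional integral with $x_d$ held fixed. Since $f$ is a polynomial of degree at most $3$ in each variable, the restriction with $x_d$ fixed is a polynomial of the same type in $d-1$ variables, so the inductive hypothesis converts the inner integral into $\sum_{v' \in V_{d-1}} w_{d-1}(v')\, f(v', x_d)$. The resulting outer integrand is, for each fixed $v'$, a cubic polynomial in $x_d$ alone, so the one-dimensional Simpson's Rule applied to $\int_{a_d}^{b_d} f(v', x_d)\,dx_d$ is exact.

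The key bookkeeping is to confirm that the weights combine as claimed. Writing any $v \in V_d$ as $v = (v', v_d)$ with $v' \in V_{d-1}$ and $v_d \in \{a_d, \frac{a_d+b_d}{2}, b_d\}$, one has $m_d(v) = m_{d-1}(v') + \mathbf{1}[v_d = \frac{a_d+b_d}{2}]$, and therefore
\begin{equation*}
w_d(v) = w_{d-1}(v') \cdot \tfrac{b_d - a_d}{6} \cdot 4^{\mathbf{1}[v_d = (a_d+b_d)/2]}.
\end{equation*}
Substituting this into the expression obtained after the two successive applications of the inductive hypothesis and the one-dimensional Simpson's Rule collapses the double sum into $\sum_{v \in V_d} w_d(v)\, f(v)$, closing the induction.

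The main obstacle is the weight-tracking rather than anything analytic: unlike the Gaussian Quadrature case where every abscissa carried the same weight $r$ in each dimension, here the midpoint is weighted four times more heavily than the endpoints, so one must be careful that the tensor-product structure $4^{m_d(v)} \prod_i \frac{b_i-a_i}{6}$ is reproduced exactly when the outer one-dimensional rule distributes its three weights across the $|V_{d-1}| = 3^{d-1}$ inner terms. Once that indexing is verified, the proof follows the template of Lemma \ref{lemma:gq_exact} with only cosmetic changes.
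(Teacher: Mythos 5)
Your proposal is correct and follows essentially the same inductive argument as the paper: peel off the outer variable with Fubini, apply the $(d-1)$-dimensional hypothesis to the inner integral, apply one-dimensional Simpson's Rule to the resulting cubic in $x_d$, and collapse the weights via $w_d(v',v_d)=w_{d-1}(v')\cdot\frac{b_d-a_d}{6}\cdot 4^{\mathbf{1}[v_d=(a_d+b_d)/2]}$. You make the weight bookkeeping more explicit than the paper does, but the route is the same.
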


\begin{proof}
It is known that Simpson's Rule is exact in the one dimensional case. Therefore, for $d=1$, we have
\begin{align*}
\int_{a}^{b} f(x)dx &= \frac{b-a}{6}\left(f(a)+4f\left(\frac{a+b}{2}\right)+f(b)\right)\\
&= \sum_{v\in V_1}w_1(v)f(v)
\end{align*}
where $V_1=(a,\frac{a+b}{2},b)$ as defined at the start of the section.

We now treat this as the base case for induction. Then assume that the claim holds for $d=n-1$. 

$$\int_{a_{n-1}}^{b_{n-1}}\ldots\int_{a_1}^{b_1} f(x_1,\ldots,x_{n-1},x_n) dx_1\ldots dx_{n-1} = \sum_{v\in V_{n-1}} w_{n-1}(v)f(v,x_n)$$

Note that, in this expression, $x_n$ is being held constant, and $V_{n-1}=\{(v_1,\ldots,v_{n-1})\mid v_i\in\{a_i,\frac{a_i+b_i}{2},b_i\}\}$ with $\left|V\right|=3^{n-1}$. Now, for $d=n$, we can replace the innermost $n-1$ integrals using the above formula.

$$\int_{a_n}^{b_n}\int_{a_{n-1}}^{b_{n-1}}\ldots\int_{a_1}^{b_1} f(x_1,\ldots,x_n)dx_1\ldots dx_n = \int_{a_n}^{b_n} \sum_{v\in V_{n-1}}w_{n-1}(v)f(v,x_n)dx_n$$

However, this is now a one-dimensional integral in terms of only $x_n$. Therefore, we can now apply Simpson's Rule again to get the following formula.

\begin{align*}
  & \int_{a_n}^{b_n} \sum_{v\in V_{n-1}}w_{n-1}(v)f(v,x_n)dx_n\\ 
  &= \frac{b_n-a_n}{6}\left(\sum_{v\in V_{n-1}}w_{n-1}(v)f(v,a_n)+4\sum_{v\in V_{n-1}}w_{n-1}(v)f(v,\frac{a_n+b_n}{2})+\sum_{v\in V_{n-1}}w_{n-1}(v)f(v,b_n)\right)\\
  &= \sum_{v\in V_{n-1}}w_{n}(v,a_n)f(v,a_n)+\sum_{v\in V_{n-1}}w_{n}(v,\frac{a_n+b_n}{2})f(v,\frac{a_n+b_n}{2})+\sum_{v\in V_{n-1}}w_{n}(v,b_n)f(v,b_n)\\
  &= \sum_{v\in V_n} w_{n}(v)f(v)
\end{align*}

Therefore, by induction, we have that, for any $d$, Simpson's Rule is exact for polynomials of degree at most 3, so we get the following formula, which proves the theorem. 

$$\int_{x\in R} f(x)=\int_{a_d}^{b_d}\ldots\int_{a_1}^{b_1} f(x_1,x_2,\ldots,x_d)dx_1 dx_2\ldots dx_d = \sum_{v\in V_d} w_d(v)f(v)$$

\end{proof}

\subsection{Finding an Error Term for Simpson's Rule}

Next, because Simpson's Rule is exact for polynomials of degree up to 3, we can now use the error formula for Hermite Interpolation to find a formula for the error of the integral estimation given by Simpson's Rule without the noisy oracle function. This formula will then be able to be used to find an upper bound on the information-theoretic error of the Simpson's Rule method when it only has access to noisy function values.

\begin{lemma}\label{lemma:sr_error_term}

A non-noisy estimation from the Simpson's Rule method will achieve an error term with the following upper bound

  \begin{equation}\label{eq:sr_error_non_noisy}
    \left|\int_{x\in R} f(x)-\sum_{v\in V_d}w_d(v)f(v)\right|\leq \frac{c}{4!}\sup_{i,x^*}\left|f_i^{(4)}(\xi(x^*))\right|\prod_{i=1}^d(b_i-a_i)
  \end{equation}
  
where $c\in\left[0,\max_i\frac{(b_i-a_i)^7}{840}\right]$. In this bound, the supremum is considering the maximum fourth derivative with respect to the $i^{th}$ dimension where $\xi(x)$ is a point determined by the error formula for Hermite Interpolation. 

\end{lemma}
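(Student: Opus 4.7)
The plan is to follow the template of Lemma \ref{lemma:gq_error_term}. Since Lemma \ref{lemma:sr_exact} guarantees that the multidimensional Simpson's Rule is exact for any polynomial of degree at most $3$ in each coordinate, the first step is to construct a Hermite-type interpolant $p_3$ that agrees with $f$ at every node $v \in V_d$ and lies in Simpson's exactness class. With such a $p_3$ in hand, the Simpson estimate of $f$ equals the exact integral of $p_3$, so the error can be rewritten as
\[
\int_R f(x)\,dx - \sum_{v \in V_d} w_d(v)\,f(v) = \int_R \bigl(f(x) - p_3(x)\bigr)\,dx,
\]
which reduces the problem to a pointwise bound on $|f - p_3|$.

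To obtain such a bound, I would fix a coordinate direction $i$ and freeze the other $d-1$ coordinates at their values, reducing to a one-dimensional interpolation on $[a_i, b_i]$ at the three Simpson nodes $\{a_i,\tfrac{a_i+b_i}{2},b_i\}$. Applying the Hermite interpolation remainder theorem with multiplicities chosen so that the resulting node polynomial carries a $(b_i-a_i)^7$ factor upon integration, one obtains a slicewise identity
\[
f_i(x_i) - p_{3,i}(x_i) = \frac{f_i^{(4)}(\xi(x))}{4!}\,\Pi_i(x_i),
\]
where $\Pi_i$ is the associated node polynomial. Taking absolute values and pulling $\sup_{i,x^*}|f_i^{(4)}(\xi(x^*))|/4!$ outside the integral, exactly as done in Lemma \ref{lemma:gq_error_term}, reduces everything to the univariate integral $\int_{a_i}^{b_i} |\Pi_i(x_i)|\,dx_i$.

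The substitution $t = x_i - (a_i+b_i)/2$ with $h_i = (b_i-a_i)/2$ converts that univariate integral into a symmetric polynomial integral in $t$; the telltale denominator $840 = 8 \cdot 105$ then emerges from integrating an even polynomial of degree $6$ over $[-h_i, h_i]$, giving precisely $(b_i-a_i)^7/840$. Multiplying by the volume $\prod_{j \ne i}(b_j - a_j)$ contributed by the frozen coordinates, and then optimizing over $i$, produces the factor $c\,\prod_{i=1}^d (b_i-a_i)$ with $c \in \bigl[0, \max_i (b_i-a_i)^7/840\bigr]$, as claimed.

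The main obstacle will be the bookkeeping when lifting the one-dimensional Hermite error formula to the full tensor-product region: the interpolant $p_{3,i}$ must be defined slicewise and Simpson's exactness must be invoked slicewise via Lemma \ref{lemma:sr_exact}. This is precisely the sleight-of-hand used in Lemma \ref{lemma:gq_error_term}, so once that reduction is in place, what remains is the symmetric-integral computation sketched above together with the standard pointwise-to-integral transfer used throughout the paper.
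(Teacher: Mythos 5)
Your proposal follows the paper's proof essentially step for step: invoke Lemma \ref{lemma:sr_exact} to replace the Simpson sum by $\int_R p_3$, reduce to a pointwise bound on $|f-p_3|$ via the slicewise Hermite remainder in the $i$-th coordinate, evaluate $\int_{a_i}^{b_i}\left((x-a_i)(x-\tfrac{a_i+b_i}{2})(x-b_i)\right)^2dx=(b_i-a_i)^7/840$ (your symmetric substitution $t=x_i-(a_i+b_i)/2$ with $840=8\cdot 105$ is exactly this computation), and finish by integrating over the region. The one place you diverge is the final accounting: you integrate the node polynomial only in the $i$-th direction and multiply by the $(d-1)$-dimensional volume $\prod_{j\ne i}(b_j-a_j)$, which yields a constant of the form $(b_i-a_i)^7/840$ times $\prod_{j\ne i}(b_j-a_j)$, i.e.\ effectively $c'=(b_i-a_i)^6/840$ against the full volume; the paper instead treats the already-integrated quantity $c=(b_i-a_i)^7/840$ as the pointwise constant in the Hermite formula and multiplies by the full $d$-dimensional volume $\prod_{i=1}^d(b_i-a_i)$. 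Your version is the tighter and arguably more natural computation, but note that it only reproduces the stated bound with $c\in\left[0,\max_i\tfrac{(b_i-a_i)^7}{840}\right]$ when $b_i-a_i\geq 1$; to match \eqref{eq:sr_error_non_noisy} literally you should adopt the paper's convention of folding the whole one-dimensional integral into the pointwise constant $c$ before integrating over $R$, as in Lemma \ref{lemma:gq_error_term}.
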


\begin{proof}
  By Lemma \ref{lemma:sr_exact}, Simpson's Rule is exact for polynomials of degree up to 3. Now recall that we let $V_d=\{(v_1,\ldots,v_d)\mid v_i\in\{a_i,\frac{a_i+b_i}{2},b_i\}\}$ be the set of points at which this method will query the oracle function. Then for any estimation $\sum_{v\in V_d}w_d(v)f(v)$ given by the Simpson's Rule method, we have that $\int_{x\in R} p_3(x)=\sum_{v\in V_d}w_d(v)f(v)$ for any polynomial $p_3$ of degree at most 3, such that $p_3(v)=f(v)$ for all $v\in V_d$. 
  
  Then we can represent the error of the Simpson's Rule estimation in terms of $p_3$ using the following equation.
  
  $$\int_{x\in R} f(x)-\sum_{v\in V_d}w_d(v)f(v)=\int_{x\in R} f(x)-\int_{x\in R} p_3(x)=\int_{x\in R}\left(f(x)-p_3(x)\right)$$
  
  Next, let $i\in\{1,\ldots,d\}$. Then by holding the other $d-1$ dimensions constant, Hermite Interpolation gives the following error formula which holds for any $x_i\in[a_i,b_i]$
  
  $$f_i(x_i)-p_{3,i}(x_i)=\frac{f_i^{(4)}(\xi(x))c}{4!}$$
  
  where $c$ is defined using the following formula.
  
  $$c=\int_{a_i}^{b_i}\left((x-a_i)(x-\frac{a_i+b_i}{2})(x-b_i)\right)^2dx=\frac{(b_i-a_i)^7}{840}$$
  
  Therefore, $c\in\left[0,\max_i\frac{(b_i-a_i)^7}{840}\right]$\\
  
  However, since the other $d-1$ dimensions are being held constant, the above expression holds for any $x_j\in\mathbb{R}$ where $i\ne j$ as long as $x_i\in[a_i,b_i]$ since the error formula for Hermite Interpolation only depends on $x_i$. Note that, by changing the constant values these other dimensions are being held to, the above expression can change since $f_i$ and $p_{3,i}$ are defined in terms of the constant values $x_j$ for $j\ne i$.\\
  
  Additionally, at any point $x=(x_1,\ldots,x_i,\ldots,x_d)\in[a_1,b_1]\times\ldots\times[a_d,b_d]$, we have that $f(x)=f_i(x_i)$ by simply holding the $d-1$ other dimensions constant to their values at this point. Now we let $p_3$ be defined by the polynomials constructed using Hermite Interpolation. Then, $p_3(x)=p_{3,i}(x)$, so since Simpson's Rule is exact for any polynomial of degree up to 3, we get the following formula for the absolute value of the error at any $x\in[a_1,b_1]\times\ldots\times[a_d,b_d]$.
  
  $$\left|f(x)-p_3(x)\right|=\left|f_i(x_i)-p_{3,i}(x_i)\right|=\left|\frac{f_i^{(4)}(\xi(x))c}{4!}\right|$$\\
  
  Therefore, by taking the supremum of this formula over all dimensions $i$ and over all points $x^*\in[a_1,b_1]\times\ldots\times[a_d,b_d]$, we can upper bound the absolute value of the error. Note that, since $c\geq 0$, we can factor it out of the absolute value.
  
  $$\left|f(x)-p_3(x)\right|\leq\sup_{i,x^*}\left|\frac{f_i^{(4)}(\xi(x^*))c}{4!}\right|= \frac{c}{4!}\sup_{i,x^*}\left|f_i^{(4)}(\xi(x^*))\right|$$
  
  Now, since the absolute value of an integral of a function is less than or equal to the integral of the absolute value of the function, we can upper bound the error of the Simpson's Rule estimate to prove the theorem. Note that $x^*$ depends on the supremum, not the integral, so the supremum can be factored out of the integral. Additionally, the volume of the region of integration $[a_1,b_1]\times\ldots\times[a_d,b_d]$ is $\prod_{i=1}^d (b_i-a_i)$.
  
  \begin{align*}
    \left|\int_{x\in R} f(x)-\sum_{v\in V_d}w_d(v)f(v)\right| &= \left|\int_{x\in R}\left(f(x)-p_3(x)\right)\right|\\
    &\leq \int_{x\in R}\left|f(x)-p_3(x)\right|\\
    &\leq \int_{x\in R} \frac{c}{4!}\sup_{i,x^*}\left|f_i^{(4)}(\xi(x^*))\right|\\
    &= \frac{c}{4!}\sup_{i,x^*}\left|f_i^{(4)}(\xi(x^*))\right| \int_{x\in R} 1\\
    &= \frac{c}{4!}\sup_{i,x^*}\left|f_i^{(4)}(\xi(x^*))\right| \prod_{i=1}^d (b_i-a_i)
  \end{align*}
  
\end{proof}

\subsection{Finding the Sample Complexity Upper Bound}

We can now use the error bound from Lemma \ref{lemma:sr_error_term} to find a sample complexity upper bound on the error of the estimate produced by the Simpson's Rule method. 

\begin{theorem}\label{theorem:sr_upper_bound}
  If $\left|f_i^{(4)}(x)\right|\leq K$ and $b_i-a_i\leq B$ for all $i\in\{1,\ldots,d\}$ and $x\in[a_1,b_1]\times\ldots\times[a_n,b_n]$, then the error for the Simpson's Rule method has the following upper bound.
  
  $$\epsilon(SR,F,\phi)\leq \frac{3^{d/2}B^d\sigma}{2^{d/2-1}\sqrt{T}}+\frac{B^{d+7}}{840\cdot 4!}K$$
  
  Likewise, if $K=0$, then we get the following upper bound.
  
  $$\epsilon(SR,F,\phi)\leq \frac{3^{d/2}B^d\sigma}{2^{d/2-1}\sqrt{T}}$$

\end{theorem}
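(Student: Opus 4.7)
The plan is to adapt the proof of Theorem \ref{theorem:gq_upper_bound} to the Simpson's Rule setting, with the main new computation being the variance of the weighted sum $\sum_{v\in V_d} w_d(v)\hat{f}(v)$. I would start by letting $S=\int_{x\in R}f(x)-\sum_{v\in V_d}w_d(v)f(v)$ denote the deterministic (non-noisy) error of Simpson's Rule, and recall from Lemma \ref{lemma:sr_error_term} that $|S|\leq \frac{B^7}{840\cdot 4!}K\prod_{i=1}^d(b_i-a_i)\leq \frac{B^{d+7}}{840\cdot 4!}K$ under the given hypotheses. As in the proof of Theorem \ref{theorem:gq_upper_bound}, the unbiasedness of $\phi$ and linearity of expectation immediately give $\mathbf{E}[\hat{f}(x)]=f(x)$, so that $\sum_{v\in V_d}w_d(v)\hat{f}(v)+S$ is an unbiased estimator of $\int_{x\in R} f(x)$.

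Next I would compute the variance of $\sum_{v\in V_d}w_d(v)\hat{f}(v)$. By independence across the $m$ oracle queries at each point and across distinct points, we have $\mathrm{Var}(\hat{f}(v))\leq\sigma^2/m$, and hence the total variance is at most $\frac{\sigma^2}{m}\sum_{v\in V_d}w_d(v)^2$. The key combinatorial step is to evaluate this sum: using $w_d(v)=4^{m_d(v)}\prod_{i=1}^d\frac{b_i-a_i}{6}$, we get
\begin{equation*}
\sum_{v\in V_d}w_d(v)^2 = \prod_{i=1}^d\left(\frac{b_i-a_i}{6}\right)^2\sum_{v\in V_d}16^{m_d(v)} = \prod_{i=1}^d\frac{(b_i-a_i)^2}{36}\cdot 18^d \leq \frac{B^{2d}}{2^d},
\end{equation*}
where the factorization $\sum_{v\in V_d}16^{m_d(v)}=\prod_{i=1}^d(1+16+1)=18^d$ uses the product structure of $V_d$ (each coordinate independently contributes $1$ for an endpoint and $16$ for the midpoint). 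Thus $\mathrm{Var}\bigl(\sum_v w_d(v)\hat{f}(v)+S\bigr)\leq \frac{B^{2d}\sigma^2}{2^d m}$.

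From here the argument mirrors Theorem \ref{theorem:gq_upper_bound}: applying Chebyshev's inequality and the Layer Cake representation to $X=\bigl|\sum_v w_d(v)\hat{f}(v)+S-\int_R f\bigr|$ yields $\mathbf{E}[X]\leq 2\sqrt{\mathrm{Var}}=\frac{2B^d\sigma}{2^{d/2}\sqrt{m}}=\frac{B^d\sigma}{2^{d/2-1}\sqrt{m}}$. By the triangle inequality,
\begin{equation*}
\mathbf{E}_\phi\left[\left|\sum_{v\in V_d}w_d(v)\hat{f}(v)-\int_{x\in R}f(x)\right|\right]\leq \frac{B^d\sigma}{2^{d/2-1}\sqrt{m}}+|S|\leq \frac{B^d\sigma}{2^{d/2-1}\sqrt{m}}+\frac{B^{d+7}}{840\cdot 4!}K.
\end{equation*}
Substituting $m=T/3^d$ in the first term gives the claimed bound, and the $K=0$ case follows trivially since then $S=0$ by Lemma \ref{lemma:sr_exact}.

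The main obstacle, and the only genuinely new ingredient compared to Theorem \ref{theorem:gq_upper_bound}, is the non-uniform weights: one must carefully evaluate $\sum_{v\in V_d}w_d(v)^2$ rather than treat it as a uniform $r^{2d}\cdot 2^d$ factor as in the Gaussian Quadrature proof. Everything else (Chebyshev plus Layer Cake, combining with the deterministic error term from Hermite interpolation, and the substitution $m=T/3^d$) is a direct transcription of the Gaussian Quadrature argument, with $2^d\mapsto 3^d$ for the number of query points and the region volume bounded by $B^d$.
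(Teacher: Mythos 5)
Your proposal is correct and follows essentially the same route as the paper's proof: the same decomposition via the deterministic error $S$, the same variance bound using $\sum_{v\in V_d}16^{m_d(v)}=18^d$ (you state it as $\prod_{i=1}^d(1+16+1)$, the paper as $\sum_{i=0}^d\binom{d}{i}16^i2^{d-i}$; these are the same identity), and the same Chebyshev--Layer Cake argument followed by the substitution $m=T/3^d$. No substantive differences.
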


\begin{proof}
  Let $S=\int_{x\in R} f(x)-\sum_{v\in V_d}w_d(v)f(v)$ be the error for a non-noisy estimation produced by the Simpson's Rule method. 
  
  Then since $\mathbf{E}[\phi(x,f)]=f(x)$ because the oracle function is unbiased, we can use the linearity of expectation to get
  
  $$\mathbf{E}\left[\hat{f}(x)\right]=\mathbf{E}\left[\frac{1}{m}\sum_{i=1}^m\phi(x,f)\right]=\frac{1}{m}\sum_{i=1}^m\mathbf{E}[\phi(x,f)]=\mathbf{E}[\phi(x,f)]=f(x)$$
  
  Therefore, we can conclude that $\hat{f}(x)$, the average of the $m$ calls to the noisy oracle function, is also unbiased. Then we can use this fact to get the following expression.
  
  $$\mathbf{E}_\phi\left[\sum_{v\in V_d}w_d(v)\hat{f}(v)+S\right]=\mathbf{E}_\phi\left[\int_{x\in R} f(x)\right]+\mathbf{E}_\phi\left[\sum_{v\in V_d}w_d(v)\hat{f}(v)-\sum_{v\in V_d}w_d(v)f(v)\right]=\int_{x\in R} f(x)+0=\int_{x\in R} f(x)$$
  
  Additionally, since each call to the oracle function is independent, we can consider the variance of $\hat{f}(x)$.
  
  \begin{align*}
    \text{Var}\left(\hat{f}(x)\right) &= \text{Var}\left(\frac{1}{m}\sum_{i=1}^m\phi(x,f)\right)\\
    &= \frac{1}{m^2}\text{Var}\left(\sum_{i=1}^m\phi(x,f)\right)\\
    &= \frac{1}{m^2}\sum_{i=1}^m\text{Var}\left(\phi(x,f)\right)\\
    &\leq \frac{1}{m^2}\sum_{i=1}^m \sigma^2\\
    &= \frac{m\sigma^2}{m^2}\\
    &= \frac{\sigma^2}{m}
  \end{align*}
  
  Therefore, we can find the variance of $\sum_{v\in V_d}w_d(v)\hat{f}(v)+S$. 
  
  \begin{align*}
    \text{Var}\left(\sum_{v\in V_d}w_d(v)\hat{f}(v)+S\right) &= \text{Var}\left(\sum_{v\in V_d}w_d(v)\hat{f}(v)\right)\\
    &= \sum_{v\in V_d} \left(w_d(v)\right)^2 \text{Var}(\hat{f}(v))\\
    &\leq \frac{\sigma^2}{m}\sum_{v\in V_d} \left(w_d(v)\right)^2\\
    &= \frac{\sigma^2}{m}\prod_{i=1}^d\frac{(b_i-a_i)^2}{36}\sum_{v\in V_d} 16^{m_d(v)}\\
    &\leq \frac{\sigma^2}{m}\frac{B^{2d}}{36^d}\sum_{i=0}^d 16^i\binom{d}{i}2^{d-i}\\
    &= \frac{\sigma^2}{m}\frac{B^{2d}}{36^d}\sum_{i=0}^d \binom{d}{i}2^{d+3i}\\
    &= \frac{\sigma^2}{m}\frac{B^{2d}}{36^d}18^d\\
    &= \frac{\sigma^2}{m}\frac{B^{2d}}{2^d}\\
  \end{align*}
  
  Next, since $\mathbf{E}_\phi\left[\sum_{v\in V_d}w_d(v)\hat{f}(v)+S\right]=\int_{x\in R} f(x)$, Chebyshev's Inequality gives the following bound.
  
  $$\mathbf{P}\left(\left|\sum_{v\in V_d}w_d(v)\hat{f}(v)+S-\int_{x\in R} f(x)\right|>\epsilon\right)\leq \min\left(\frac{B^{2d}\sigma^2}{m2^d\epsilon^2},1\right)$$
  
  Now, let $X=\left|\sum_{v\in V_d}w_d(v)\hat{f}(v)+S-\int_{x\in R} f(x)\right|$. Then the Layer Cake representation gives the following expression for $\mathbf{E}[X]$.
  
  \begin{align*}
    \mathbf{E}\left[X\right] &= \int_0^\infty \mathbf{P}\left(X>\alpha\right)d\alpha\\
    &\leq \int_0^\infty \min\left(\frac{B^{2d}\sigma^2}{m2^d x^2},1\right)\\
    &\leq \int_0^{B^d\sigma/\sqrt{m2^d}}1d\alpha+\frac{B^{2d}\sigma^2}{m2^d}\int_{B^d\sigma/\sqrt{m2^d}}^\infty \frac{1}{\alpha^2} d\alpha\\
    &= \frac{B^d\sigma}{\sqrt{m2^d}}+\frac{B^{2d}\sigma^2}{m2^d}\frac{\sqrt{m2^d}}{B^d\sigma}\\
    &= 2\frac{B^d\sigma}{\sqrt{m2^d}}\\
    &= \frac{B^d\sigma}{2^{d/2-1}\sqrt{m}}
  \end{align*}
  
  Next, since $\mathbf{E}_\phi\left[\left|\sum_{v\in V_d}w_d(v)\hat{f}(v)+S-\int_{x\in R} f(x)\right|\right]=\mathbf{E}[X]\leq \frac{B^d\sigma}{2^{d/2-1}\sqrt{m}}$, we can get the following upper bound.
  
  $$\mathbf{E}_\phi\left[\left|\sum_{v\in V_d}w_d(v)\hat{f}(v)-\int_{x\in R} f(x)\right|\right]\leq \frac{B^d\sigma}{2^{d/2-1}\sqrt{m}}-S$$
  
  However, Lemma \ref{lemma:sr_error_term} tells us that $\left|S\right|\leq \frac{c}{4!}\sup_{i,x^*}\left|f_i^{(4)}(\xi(x^*))\right|\prod_{i=1}^d(b_i-a_i)$. Therefore, we can substitute this formula into the above inequality to get a new upper bound.
  
  $$\mathbf{E}_\phi\left[\left|\sum_{v\in V_d}w_d(v)\hat{f}(v)-\int_{x\in R} f(x)\right|\right]\leq \frac{B^d\sigma}{2^{d/2-1}\sqrt{m}}+\frac{c}{4!}\sup_{i,x^*}\left|f_i^{(4)}(\xi(x^*))\right|\prod_{i=1}^d(b_i-a_i)$$
  
  Then since $\left|f_i^{(4)}(x)\right|\leq K$ for all $x\in[a_1,b_1]\times\ldots\times[a_d,b_d]$, $b_i-a_i\leq B$ for all $i\in\{1,\ldots,d\}$, and $c\in\left[0,\max_i\frac{(b_i-a_i)^7}{840}\right]\subseteq[0,\frac{B^7}{840}]$, we get the following inequality.
  
  $$\mathbf{E}_\phi\left[\left|\sum_{v\in V_d}w_d(v)\hat{f}(v)-\int_{x\in R} f(x)\right|\right]\leq \frac{B^d\sigma}{2^{d/2-1}\sqrt{m}}+\frac{B^{d+7}}{840\cdot 4!}K$$
  
  Finally, since $T=m3^d$, we can substitute $m=T/3^d$ into the above formula to get the following inequality, thus proving the first part of Theorem \ref{theorem:sr_upper_bound}.
  
  $$\epsilon(SR,F,\phi)\leq \frac{3^{d/2}B^d\sigma}{2^{d/2-1}\sqrt{T}}+\frac{B^{d+7}}{840\cdot 4!}K$$
  
  Then it is trivial to see that, if $K=0$, the other bound in the theorem holds. Therefore, we have proven Theorem \ref{theorem:sr_upper_bound}.
  
  $$\epsilon(SR,F,\phi)\leq \frac{3^{d/2}B^d\sigma}{2^{d/2-1}\sqrt{T}}$$
  
\end{proof}
\section{Comparison of Sample Complexity Upper Bound Between Gaussian Quadrature and Simpson's Rule}
\label{sec:gqsrcomparison} 

Now that we have acquired a sample complexity upper bound for both Gaussian Quadrature and Simpson's Rule, we can compare the bounds from Theorems \ref{theorem:gq_upper_bound} and \ref{theorem:sr_upper_bound} by setting $B=2r$ in Simpson's Rule since Gaussian Quadrature requires the integration be performed over the region $[-r,r]^d$. Recall the following upper bound for Gaussian Quadrature from Theorem \ref{theorem:gq_upper_bound}.

$$\epsilon(GQ,F,\phi)\leq \frac{2^{d+1}r^d\sigma}{\sqrt{T}}+\frac{2^{d+1} r^{d+5}}{6\cdot 45}K$$

Then, recall the upper bound for Simpson's Rule from Theorem \ref{theorem:sr_upper_bound}. By substituting $B=2r$ into this formula, we get the following upper bound for Simpson's Rule which can then be compared to Gaussian Quadrature.

\begin{align*}
    \epsilon(SR,F,\phi) &\leq \frac{3^{d/2}B^d\sigma}{2^{d/2-1}\sqrt{T}}+\frac{B^{d+7}}{840\cdot 4!}K\\
    &= \frac{3^{d/2}2^d r^d\sigma}{2^{d/2-1}\sqrt{T}}+\frac{2^{d+7}r^{d+7}}{840\cdot 4!}K\\
    &= \frac{3^{d/2}2^{d/2+1} r^d\sigma}{\sqrt{T}}+\frac{2^{d+1}r^{d+7}}{7\cdot 45}K
\end{align*}

Therefore, we see that Simpson's Rule needs to make $3^d$ queries for every $2^d$ queries made by Gaussian Quadrature in order for the noise from the oracle function to affect both methods the same as, if $T_{GQ}=m2^d$ while $T_{SR}=m3^d$, the two bounds have matching first terms. Additionally, the error due to the methods' estimations only differs by a factor of $6r^2/7$. As such, we can conclude that these methods behave similarly over equivalent regions. However, Simpson's Rule needs to query more points to achieve the same error rate as Gaussian Quadrature. As such, the main benefit of Simpson's Rule is simply that it can support arbitrary rectangular regions while Gaussian Quadrature is able to use fewer points to estimate the integral over a strictly cubic region.

\end{document}